\newcommand{\R}{{\if mm {\rm I}\mkern -3mu{\rm R}\else \leavevmode
\hbox{I}\kern -.17em\hbox{R} \fi}}
\newcommand{\cF}{\mbox{{${\cal F}$}}}
\newcommand{\bu}{\mbox{\boldmath{$u$}}}
\newcommand{\bv}{\mbox{\boldmath{$v$}}}
\newcommand{\bx}{\mbox{\boldmath{$x$}}}
\newcommand{\fb}{\mbox{\boldmath{$f$}}}
\newcommand{\bxi}{\mbox{\boldmath{$\xi$}}}
\newcommand{\bsigma}{\mbox{\boldmath{$\sigma$}}}
\newcommand{\btau}{\mbox{\boldmath{$\tau$}}}
\newcommand{\bvarepsilon}{\mbox{\boldmath{$\varepsilon$}}}
\newcommand{\bnu}{\mbox{\boldmath{$\nu$}}}
\newcommand{\bzero}{\mbox{\boldmath{$0$}}}
\newcommand{\weak}{\rightharpoonup}
\def\bar{\overline}
\def\real{\mathbb{R}}
\newtheorem{theorem}{Theorem}[section]
\newtheorem{lemma}[theorem]{Lemma}
\newtheorem{corollary}[theorem]{Corollary}
\newtheorem{proposition}[theorem]{Proposition}
\numberwithin{equation}{section}
\newenvironment{proof}[1][Proof]{\textbf{#1.} }
{\ \rule{0.75em}{0.75em}\smallskip}
\begin{document}

\begin{center}
\Large\bf Convergence Analysis of Penalty Based Numerical Methods for Constrained Inequality Problems
\end{center}

\begin{center}
Weimin Han\footnote{Program in Applied Mathematical and Computational Sciences (AMCS)
\& Department of Mathematics, University of Iowa, Iowa City, IA 52242, USA, 
e-mail: {\tt weimin-han@uiowa.edu}.  The work was supported by NSF grant DMS-1521684.}
\quad and\quad 
Mircea Sofonea\footnote{Laboratoire de Math\'ematiques et Physique.
Universit\'e de Perpignan Via Domitia, 52 Avenue Paul Alduy, 66860 Perpignan, France;
e-mail: {\tt sofonea@univ-perp.fr}}
\end{center}

\medskip

\begin{quote}
\noindent {\bf Abstract.} \ 
This paper presents a general convergence theory of penalty based numerical methods for 
elliptic constrained inequality problems, including variational inequalities, hemivariational 
inequalities, and variational-hemivariational inequalities.  The constraint is relaxed by a 
penalty formulation and is re-stored as the penalty parameter tends to zero.  The main theoretical
result of the paper is the convergence of the penalty based numerical solutions to the solution
of the constrained inequality problem as the mesh-size and the penalty parameter approach zero
simultaneously but independently.  The convergence of the penalty based numerical methods is
first established for a general elliptic variational-hemivariational inequality with constraints,
and then for hemivariational inequalities and variational inequalities as special cases.
Applications to problems in contact mechanics are described.

\smallskip
\noindent {\bf Keywords.} \ 
Hemivariational inequality, variational inequality,  variational-hemivariational inequality, 
penalty based numerical methods, convergence, contact mechanics

\smallskip
\noindent {\bf 2010 Mathematics Subject Classification.} \ 
65N30, 65N15, 74M10, 74M15
\end{quote}

\section{Introduction}\label{s1}

Penalty methods are an effective approach in the numerical solution of problems with constraints.
In a penalty method, the constraint of the original problem is allowed to be violated but the
violation is penalized.  The constraint is restored in the limit when a small penalty parameter
goes to zero.  Penalty methods have been applied to a variety of constrained problems of importance,
e.g., penalty methods for the incompressibility constraint in incompressible fluid flow
problems (e.g., \cite{GR1986}), in contact problems (e.g., \cite{KO1988}), or in the context of 
general variational inequalities (e.g., \cite{GLT1981}).  In most related
references, convergence of the penalty based numerical method is carried out either at the 
continuous level or for an arbitrary but fixed finite dimensional approximation of the original
constrained problem.  More precisely, denote by $\epsilon>0$ the small penalty parameter,
by $h>0$ the mesh-size for the finite dimensional approximation; and let $u$, $u_\epsilon$,
$u^h$, and $u^h_\epsilon$ be the solution of the original constrained problem, the solution of the penalized
problem at the continuous level, the numerical solution of the original constrained problem, 
and the numerical solution of the penalized problem, respectively.  Then a typical convergence result found
in the literature for the penalty methods is of the type $u_\epsilon\to u$ as $\epsilon\to 0$, 
or for $h$ fixed, $u_\epsilon^h\to u^h$ as $\epsilon\to 0$.  In this paper, we will establish
the convergence result $u_\epsilon^h\to u$ as $h,\epsilon\to 0$ and we will achieve
this for families of constrained inequality problems, including variational inequalities and
hemivariational inequalities.

Hemivariational inequalities were introduced in early 1980s by Panagiotopoulos in the context of 
applications in engineering problems involving non-monotone and possibly multi-valued 
constitutive or interface laws for deformable bodies.  Studies of hemivariational inequalities can be 
found in several comprehensive references, e.g., \cite{NP1995, P1993}, and more recently, \cite{MOS2013}.
The book \cite{HMP1999} is devoted to the finite element approximation of 
hemivariational inequalities, where convergence of the numerical methods is discussed.
In the recent years, there have been efforts to derive error estimates.   In the literature, 
the paper \cite{HMS14} provides the first optimal order error estimate for the linear finite element method 
in solving hemivariational or variational-hemivariational inequalities.  The idea of the derivation 
technique in \cite{HMS14} was adopted in several papers by various authors for deriving optimal order 
error estimates for the linear finite element method of a few individual hemivariational or 
variational-hemivariational inequalities. More recently, we have developed general frameworks of 
convergence theory and error estimation for hemivariational or variational-hemivariational inequalities: 
for internal numerical approximations of general hemivariational and variational-hemivariational 
inequalities in \cite{HSB, HSD}, and for both internal and external numerical approximations of 
general hemivariational and variational-hemivariational inequalities in \cite{Han17}.  
In these recent papers, convergence is shown for numerical solutions by internal or external approximation 
schemes under minimal solution regularity condition, C\'{e}a type inequalities are derived that serve as 
the starting point for error estimation, for hemivariational and variational-hemivariational inequalities 
arising in contact mechanics, optimal order error estimates for the linear finite element solutions are derived.  

In \cite{MOS30}, well-posedness of a family of variational-hemivariational inequalities was established.
In addition, a penalty formulation for the constrained variational-hemivariational inequalities was introduced 
and convergence of the penalty solutions is shown when the penalty parameter tends to zero.  
The penalty method has also been used in the study of history-dependent variational or 
variational-hemivariational inequalities, in \cite{SP14} and \cite{SMH18}, respectively, 
where convergence of the penalty method is shown as the penalty parameter goes to zero.
In \cite{HMS18}, a penalty based numerical method is introduced and studied for sample hemivariational 
inequalities in unilateral contact mechanics.  In this paper, we substantially extend the result 
in \cite{HMS18} to cover penalty based numerical methods for solving general constrained inequality problems,
including variational inequalities, hemivariational inequalities and variational-hemivariational inequalities,
prove convergence of the numerical solutions as both the meshsize and the penalty parameter tend to zero.

The rest of the paper is organized as follows.  In Section~\ref{s2} we review some basic notions 
needed in the study of variational-hemivariational inequalities.
In Section~\ref{sec:VHVI}, we introduce a general variational-hemivariational inequality. 
In Section~\ref{s3} we describe numerical methods based on penalty formulation for solving the constrained 
variational-hemivariational inequalities, and show convergence of the penalty based numerical methods 
in Section \ref{sec:conver}.  Results on variational-hemivariational inequalities automatically 
reduce to corresponding ones on purely hemivariational inequalities and purely variational inequalities, 
respectively, with simplified conditions. We  include some comments in Section \ref{sec:particular} 
on the convergence of the penalty based numerical methods for such inequalities. Finally,
in Section \ref{s4} we illustrate the application of the results from previous sections in the 
study of the penalty based numerical methods for two mathematical models describing
unilateral contact of an elastic body with an obstacle.

\section{Preliminaries}\label{s2}

We introduce some basic notions and results in this section.  All the spaces used in this paper are real.  
For a normed space $X$, we denote by $\|\cdot\|_X$ its norm, by $X^*$ its dual space, and by 
$\langle\cdot,\cdot\rangle_{X^*\times X}$ the duality pairing of $X^*$ and $X$. 
When no confusion may arise, we simply write 
$\langle\cdot,\cdot\rangle$ instead of $\langle\cdot,\cdot\rangle_{X^*\times X}$. 
Weak convergence is indicated by the symbol $\weak$.  The space of all linear continuous operators 
from one normed space $X$ to another normed space $Y$ is denoted by ${\cal L}(X,Y)$.

An operator  $A \colon X \to X^*$ is said to be pseudomonotone if it is bounded and $u_n \weak u$ in $X$
together with $\displaystyle \limsup_{n\to\infty}\,\langle A u_n, u_n -u \rangle_{X^*\times X} \le 0$ imply 
\[ \langle A u, u - v \rangle_{X^*\times X} \le 
   \liminf_{n\to\infty}\,\langle A u_n, u_n - v \rangle_{X^*\times X}\quad\forall\,v \in X. \]
The operator $A$ is said to be demicontinuous if $u_n\to u$ in $X$ implies $Au_n\weak Au$ in $X^*$. 
A function $\varphi \colon X \to \real$ is said to be lower semicontinuous (l.s.c.)
if $x_n \to x$ in $X$ implies $\varphi (x) \le \liminf_{n\to\infty} \varphi (x_n)$.  
For a convex function $\varphi$,   the set
\[ \partial_c\varphi (x) := \left\{ x^*\in X^* \mid
   \varphi(v)-\varphi(x)\ge \langle x^*, v -x \rangle_{X^*\times X}\ \forall\, v \in X\right\} \]
is known as the subdifferential (in the sense of convex analysis) of $\varphi$  at $x\in X$.  Elements 
in $\partial\varphi(x)$ are called subgradients of $\varphi$ at $x$.  Properties of convex functions 
can be found in \cite{ET}.  A continuity result on convex functions is as follows (cf.\ \cite[p.\ 13]{ET}).

\begin{lemma}\label{lem:continuity}
A l.s.c.\ convex function $\varphi:X\to\real$ on a Banach space $X$ is continuous. 
\end{lemma}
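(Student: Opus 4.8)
The plan is to reduce continuity of $\varphi$ to a local upper-boundedness property and to extract that property from a Baire category argument. Since $\varphi$ is real-valued and convex, it suffices to show that $\varphi$ is continuous at every point of $X$, and for a convex function continuity at a point follows once one knows that $\varphi$ is bounded above on a neighborhood of that point. So the crucial step is to produce one open ball on which $\varphi$ is bounded above. For this I would consider the sublevel sets $S_n=\{x\in X:\varphi(x)\le n\}$, $n\in\nat$. Each $S_n$ is convex (because $\varphi$ is convex) and closed (because $\varphi$ is l.s.c.), and $\bigcup_{n\in\nat}S_n=X$ since $\varphi$ takes only finite values. As $X$ is a Banach space, hence complete, the Baire category theorem yields some $N$ for which $S_N$ has nonempty interior; thus $\varphi\le N$ on some open ball $B(x_0,2r):=\{x\in X:\|x-x_0\|_X<2r\}$ with $r>0$.

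Next I would transport this bound to an arbitrary point $x_1\in X$. Put $w=2x_1-x_0$; then $\varphi(w)$ is finite, and for every $y\in B(x_0,2r)$ the point $\tfrac12 y+\tfrac12 w=x_1+\tfrac12(y-x_0)$ lies in $B(x_1,r)$, while convexity gives
\[ \varphi\bigl(\tfrac12 y+\tfrac12 w\bigr)\le \tfrac12\varphi(y)+\tfrac12\varphi(w)\le \tfrac{N}{2}+\tfrac12\varphi(w)=:M_1 . \]
Since $y\mapsto\tfrac12 y+\tfrac12 w$ maps $B(x_0,2r)$ onto $B(x_1,r)$, this shows $\varphi\le M_1$ on $B(x_1,r)$.

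The final step is the classical local estimate: a convex function bounded above on a ball is Lipschitz on the concentric ball of half the radius. First, from $x_1=\tfrac12(x_1+h)+\tfrac12(x_1-h)$ with $\|h\|_X<r$ and convexity one gets $\varphi(x_1+h)\ge 2\varphi(x_1)-\varphi(x_1-h)\ge 2\varphi(x_1)-M_1$, so $\varphi$ is bounded below on $B(x_1,r)$ as well; let $M$ denote a bound for $|\varphi|$ there. Then, given $x\ne y$ in $B(x_1,r/2)$, extend the segment past $y$ to $z=y+\frac{r/2}{\|y-x\|_X}(y-x)\in B(x_1,r)$; since $y=(1-\mu)x+\mu z$ with $\mu=\|y-x\|_X\big/\bigl(\|y-x\|_X+r/2\bigr)$, convexity yields $\varphi(y)-\varphi(x)\le\mu\bigl(\varphi(z)-\varphi(x)\bigr)\le (4M/r)\,\|y-x\|_X$, and exchanging the roles of $x$ and $y$ gives $|\varphi(x)-\varphi(y)|\le (4M/r)\,\|x-y\|_X$. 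Hence $\varphi$ is Lipschitz, in particular continuous, on $B(x_1,r/2)$; as $x_1\in X$ was arbitrary, $\varphi$ is continuous on $X$.

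The only genuinely nontrivial ingredient is the first step, and there both hypotheses are used in an essential way: completeness of $X$ enters through Baire's theorem, and lower semicontinuity is what makes the sublevel sets closed — without it the conclusion fails, as any discontinuous linear functional shows. The second and third steps are routine manipulations with convex combinations, so I expect the careful write-up of the Baire argument to be the main point requiring attention.
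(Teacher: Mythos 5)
Your proof is correct: the Baire-category argument on the closed convex sublevel sets $S_n=\{x\in X:\varphi(x)\le n\}$ yields local upper boundedness, and the subsequent transport to an arbitrary point and the local Lipschitz estimate are all carried out accurately. The paper itself gives no proof of this lemma, citing only \cite[p.~13]{ET}, and your argument is precisely the classical one found in that reference, so there is nothing to add.
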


Another result on convex functions that we will use is a lower bound of a convex function 
by a continuous affine functional (cf.\ \cite[Lemma 11.3.5]{AH2009}, \cite[Prop.\ 5.2.25]{DMP1}).

\begin{lemma}\label{lem:lower}
Let $Z$ be a normed space and let $\varphi:Z\to\mathbb{R}$ be 
convex and l.s.c.  Then there exist a continuous linear functional
$\ell_\varphi\in Z^\prime$ and a constant $\overline{c}\in \mathbb{R}$ such that
\[ \varphi(z)\ge \ell_\varphi(z)+\overline{c}\quad\forall\,z\in Z.\]
Consequently, there exist two constants $\overline{c}$ and $\tilde{c}$ such that 
\begin{equation}
\varphi(z)\ge \overline{c}+\tilde{c}\,\|z\|_Z\quad\forall\,z\in Z.
\label{eq2.1}
\end{equation}
\end{lemma}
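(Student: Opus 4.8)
The statement is classical (see the cited references); the plan is to derive the affine minorant from the geometric form of the Hahn--Banach theorem applied in the product space $Z\times\mathbb{R}$, and then to deduce \eqref{eq2.1} by a one-line estimate. First I would introduce the epigraph
\[ E:=\bigl\{(z,t)\in Z\times\mathbb{R}\;:\;\varphi(z)\le t\bigr\}. \]
Since $\varphi$ is convex, $E$ is convex; since $\varphi$ is l.s.c., $E$ is closed in $Z\times\mathbb{R}$; and $E\neq\emptyset$ because $\varphi$ is real-valued. Fix $z_0\in Z$ and set $p_0:=(z_0,\varphi(z_0)-1)\notin E$. Since $\{p_0\}$ is compact and $E$ is closed and convex, the strict separation form of Hahn--Banach yields a nonzero continuous linear functional on $Z\times\mathbb{R}$, i.e.\ a pair $(\Lambda,\beta)\in Z^\prime\times\mathbb{R}$, together with a number $\gamma\in\mathbb{R}$ such that
\[ \Lambda(z)+\beta\,t>\gamma>\Lambda(z_0)+\beta\,\bigl(\varphi(z_0)-1\bigr)\qquad\text{for all }(z,t)\in E. \]

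The key step is to check that $\beta>0$. Fixing $z$ and letting $t\to+\infty$ in the left-hand inequality forces $\beta\ge0$. If $\beta=0$, then $\Lambda\ne0$ and $\Lambda(z)>\gamma$ for every $z\in Z$ (the domain of $\varphi$ is all of $Z$), which is impossible since a nonzero linear functional is unbounded below; more directly, taking $(z,t)=(z_0,\varphi(z_0))\in E$ and comparing with the right-hand inequality gives $\Lambda(z_0)>\gamma>\Lambda(z_0)$. Hence $\beta>0$.

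Finally I would evaluate the separation inequality at the boundary points $(z,\varphi(z))\in E$, obtaining $\Lambda(z)+\beta\,\varphi(z)>\gamma$ for all $z\in Z$, that is,
\[ \varphi(z)>-\frac{1}{\beta}\,\Lambda(z)+\frac{\gamma}{\beta}\qquad\forall\,z\in Z. \]
Setting $\ell_\varphi:=-\beta^{-1}\Lambda\in Z^\prime$ and $\overline{c}:=\beta^{-1}\gamma$ gives the asserted lower bound. The estimate \eqref{eq2.1} is then immediate: since $\ell_\varphi(z)\ge-\|\ell_\varphi\|_{Z^\prime}\,\|z\|_Z$, it holds with $\tilde{c}:=-\|\ell_\varphi\|_{Z^\prime}$ and the same constant $\overline{c}$.

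The only point requiring care is the correct invocation of the Hahn--Banach theorem in its geometric (strict separation) form for a point disjoint from a closed convex set in a normed (hence locally convex) space, followed by the argument that the real component $\beta$ of the separating functional is strictly positive rather than merely nonnegative; the remainder is routine bookkeeping.
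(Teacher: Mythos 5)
Your proof is correct and complete. The paper does not actually prove this lemma --- it only cites \cite{AH2009} and \cite{DMP1} --- and your epigraph-separation argument is precisely the standard proof found in those references, including the two points that genuinely require care: the closedness and convexity of the epigraph (from lower semicontinuity and convexity of $\varphi$) and the strict positivity of the vertical component $\beta$ of the separating functional, which you establish cleanly by the contradiction $\Lambda(z_0)>\gamma>\Lambda(z_0)$.
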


Assume $\psi \colon X \to \real$ is locally Lipschitz continuous.  The generalized (Clarke) 
directional derivative of $\psi$ at $x \in X$ in the direction $v \in X$ is defined by
\begin{equation*}
\psi^{0}(x; v) := \limsup_{y \to x, \, \epsilon \downarrow 0} 
\frac{\psi(y + \epsilon v) - \psi(y)}{\epsilon}\,.
\end{equation*}
The generalized subdifferential of $\psi$ at $x$ is a subset of the dual space $X^*$ given by
\begin{equation*}
\partial \psi (x) := \{\, \zeta \in X^* \mid \psi^{0}(x; v) \ge
{\langle \zeta, v \rangle}_{X^* \times X} \ \forall\, v \in X \, \}.
\end{equation*}
Details on properties of the 
subdifferential in the Clarke sense can be found in the books \cite{Clarke,DMP1,MOS2013,NP1995}.

On several occasions, we will apply the modified Cauchy inequality: for any $\delta>0$,
there exists a constant $c$ depending only on $\delta$ such that
\begin{equation}
a\,b\le \delta\,a^2+c\,b^2\quad\forall\,a,b\in\mathbb{R}.
\label{eq2.2}
\end{equation}
In fact, we may simply take $c=1/(4\,\delta)$ in \eqref{eq2.2}.

\section{A general constrained variational-hemivariational inequality}\label{sec:VHVI}

The constrained variational-hemivariational inequality problem was studied in \cite{MOS30}.  
Here, we follow the presentation in \cite{Han17} to describe the problem.
Let $X, X_\varphi, X_j$ be normed spaces and $K \subsetneq X$.  Let there be given operators 
$A \colon X \to X^*$, $\gamma_\varphi:X\to X_\varphi$, $\gamma_j:X\to X_j$, and functionals 
$\varphi \colon  X_\varphi\times  X_\varphi\to \real$, $j \colon X_j \to \real$, $j$ being
locally Lipschitz.  The variational-hemivariational inequality we will consider is stated as follows.
\smallskip

\noindent {\sc Problem} (P). 
{\it Find an element  $u \in K$ such that}
\begin{align}
& \langle Au,v-u\rangle+\varphi(\gamma_\varphi u,\gamma_\varphi v)-\varphi(\gamma_\varphi u,\gamma_\varphi u)
\nonumber\\
&\qquad{} + j^0(\gamma_j u;\gamma_j v -\gamma_j u)\ge \langle f, v - u \rangle\quad\forall\, v \in K.
\label{hv}
\end{align}

We will make use of the following conditions.

\smallskip

$(A_1)$ $X$ is a reflexive Banach space, $K\subsetneq X$ is non-empty, closed and convex.

$(A_2)$ $X_\varphi$ is a Banach space and $\gamma_\varphi\in{\cal L}(X,X_\varphi)$ with a
continuity constant $c_\varphi>0$:
\begin{equation}
\|\gamma_\varphi v\|_{X_\varphi}\le c_\varphi \|v\|_X\quad\forall\,v\in X. 
\label{Xphi}
\end{equation}

$(A_3)$ $X_j$ is a Banach space and $\gamma_j\in{\cal L}(X,X_j)$ with a continuity constant $c_j>0$:
\begin{equation}
\|\gamma_j v\|_{X_j}\le c_j \|v\|_X\quad\forall\,v\in X. 
\label{Xj}
\end{equation}

$(A_4)$ $A \colon X \to X^*$ is pseudomonotone and strongly monotone with a 
constant $m_A > 0$:
\begin{equation}
\langle Av_1 - Av_2, v_1 - v_2 \rangle\ge m_A \| v_1 - v_2 \|_X^2\quad \forall\, v_1, v_2 \in X.
\label{assum:A}
\end{equation}

$(A_5)$ $\varphi \colon X_\varphi \times X_\varphi \to \real$ is such that
$\varphi (z,\cdot)\colon  X_\varphi\to\real$ is convex and l.s.c.\ for all $z\in X_\varphi$, 
and for a constant $\alpha_\varphi \ge 0$,
\begin{align}
& \varphi (z_1, z_4) - \varphi (z_1, z_3) + \varphi (z_2,z_3) - \varphi (z_2,z_4) \nonumber\\
&{}\qquad \le \alpha_\varphi\|z_1-z_2\|_{X_\varphi}\|z_3-z_4\|_{X_\varphi}
\quad\forall\,z_1,z_2, z_3,z_4\in X_\varphi. \label{assum:phi}
\end{align}

$(A_6)$ $j\colon X_j\to\real$ is locally Lipschitz, and for some constants $c_0,c_1,\alpha_j\ge 0$,
\begin{align}
&\| \partial j(z) \|_{X^*_j} \le c_0 + c_1 \|z\|_{X_j} \quad \forall\,z\in X_j,\label{assum:j1} \\
&j^0(z_1;z_2 -z_1) + j^0(z_2;z_1 -z_2) \le \alpha_j \|z_1 -z_2 \|_{X_j}^2 \quad\forall\,z_1,z_2\in X_j.
\label{assum:j2}
\end{align}

$(A_7)$
\begin{equation}
\alpha_\varphi c_\varphi^2 + \alpha_j c_j^2 < m_A. \label{small}
\end{equation}

$(A_8)$
\begin{equation}\label{assum:f}
f \in X^*.
\end{equation}

The inequality (\ref{hv}) represents a variational-hemivariational inequality since the 
function $\varphi(z, \cdot)$ is assumed to be convex for any $z\in X_\varphi$ and the function $j$ 
is assumed locally Lipschitz and generally nonconvex.   Note that we assume $K$ is a proper subset of $X$, 
and so the corresponding inequality \eqref{hv} is termed a constrained variational-hemivariational inequality.  
The spaces $X_\varphi$ and $X_j$ were introduced to facilitate error analysis of 
numerical solutions of Problem~(P).  This is useful also for the convergence  analysis of the 
penalty based numerical method in this paper. For applications in contact mechanics, 
the functionals $\varphi(\cdot,\cdot)$ and $j(\cdot)$ are integrals over the contact boundary $\Gamma_3$.
In such a situation, $X_\varphi$ and $X_j$ can be chosen to be $L^2(\Gamma_3)^d$ and/or $L^2(\Gamma_3)$.
Moreover, $\gamma_\varphi\colon X\to X_\varphi$ and $\gamma_j\colon X\to X_j$ are linear, 
continuous and compact operators. 

As was noted in \cite{Han17}, by slightly modifying the proof in \cite{MOS30} (see also
\cite[Remark 13]{SM2018}), the following existence and uniqueness result can be proved.

\begin{theorem}\label{t1}
Under assumptions $(A_1)$--$(A_8)$, Problem {\rm (P)} has a unique solution $u \in K$.
\end{theorem}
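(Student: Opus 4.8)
I would follow the two-step scheme of \cite{MOS30}: first prove uniqueness by a direct monotonicity argument, then obtain existence by a fixed-point reduction that ``freezes'' the first argument of $\varphi$ and solves the resulting inequality via the surjectivity theory for pseudomonotone operators. For uniqueness, let $u_1,u_2\in K$ both satisfy \eqref{hv}; put $v=u_2$ in the inequality for $u_1$, $v=u_1$ in the inequality for $u_2$, and add. The $\varphi$-terms combine into $\varphi(\gamma_\varphi u_1,\gamma_\varphi u_2)-\varphi(\gamma_\varphi u_1,\gamma_\varphi u_1)+\varphi(\gamma_\varphi u_2,\gamma_\varphi u_1)-\varphi(\gamma_\varphi u_2,\gamma_\varphi u_2)$, bounded by $\alpha_\varphi c_\varphi^2\|u_1-u_2\|_X^2$ via \eqref{assum:phi} and \eqref{Xphi}; the generalized directional derivative terms are bounded by $\alpha_j c_j^2\|u_1-u_2\|_X^2$ via \eqref{assum:j2} and \eqref{Xj}; and \eqref{assum:A} bounds the remaining term below by $m_A\|u_1-u_2\|_X^2$. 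Hence $m_A\|u_1-u_2\|_X^2\le(\alpha_\varphi c_\varphi^2+\alpha_j c_j^2)\|u_1-u_2\|_X^2$, and \eqref{small} forces $u_1=u_2$.

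\textbf{An intermediate problem.} For fixed $w\in X_\varphi$, consider the problem of finding $u_w\in K$ with
\[
\langle Au_w,v-u_w\rangle+\varphi(w,\gamma_\varphi v)-\varphi(w,\gamma_\varphi u_w)+j^0(\gamma_j u_w;\gamma_j v-\gamma_j u_w)\ge\langle f,v-u_w\rangle\quad\forall\,v\in K.
\]
Since $v\mapsto\varphi(w,\gamma_\varphi v)$ is convex and, by Lemma~\ref{lem:continuity}, continuous, while $K$ is nonempty, closed and convex by $(A_1)$, this inequality is equivalent---after incorporating $K$ through its indicator function $I_K$---to the inclusion $f\in\mathcal{A}u_w$ in $X^*$, where $\mathcal{A}:=A+\partial_c\bigl(\varphi(w,\gamma_\varphi\,\cdot)+I_K\bigr)+\gamma_j^*\partial j(\gamma_j\,\cdot)$. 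Combining strong monotonicity \eqref{assum:A} with \eqref{assum:j2}, \eqref{Xj} and the monotonicity of the convex subdifferential, one sees that $\mathcal{A}$ is strongly monotone with constant $m_A-\alpha_j c_j^2$, which is positive by \eqref{small}; since $A$ is pseudomonotone by $(A_4)$, the convex subdifferential is maximal monotone, and the Clarke term is controlled by \eqref{assum:j1} and Lemma~\ref{lem:lower}, $\mathcal{A}$ is pseudomonotone as a set-valued operator on the reflexive space $X$, and its strong monotonicity makes it coercive. A surjectivity theorem for coercive pseudomonotone set-valued operators then provides $u_w$, unique by strong monotonicity.

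\textbf{Fixed point and conclusion.} Define $\Lambda\colon X_\varphi\to X_\varphi$ by $\Lambda w=\gamma_\varphi u_w$. Given $w_1,w_2\in X_\varphi$, testing the intermediate inequality for $w_1$ with $v=u_{w_2}$ and that for $w_2$ with $v=u_{w_1}$, adding, and using \eqref{assum:A}, \eqref{assum:phi}, \eqref{assum:j2}, \eqref{Xphi} and \eqref{Xj}, I obtain
\[
(m_A-\alpha_j c_j^2)\,\|u_{w_1}-u_{w_2}\|_X^2\le\alpha_\varphi c_\varphi\,\|w_1-w_2\|_{X_\varphi}\,\|u_{w_1}-u_{w_2}\|_X,
\]
so $\|\Lambda w_1-\Lambda w_2\|_{X_\varphi}\le c_\varphi\|u_{w_1}-u_{w_2}\|_X\le\dfrac{\alpha_\varphi c_\varphi^2}{m_A-\alpha_j c_j^2}\,\|w_1-w_2\|_{X_\varphi}$, and \eqref{small} makes the contraction constant strictly less than $1$. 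Since $X_\varphi$ is a Banach space, the Banach fixed point theorem gives a unique $w^*$ with $\Lambda w^*=w^*$; then $u:=u_{w^*}$ solves Problem~(P), and uniqueness has already been established.

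\textbf{Main obstacle.} The one genuinely delicate step is the existence for the intermediate problem---showing that $\mathcal{A}$ is pseudomonotone and coercive on $X$ and invoking the right surjectivity result---since this requires the stability of pseudomonotonicity under sums, the maximal monotonicity of the convex subdifferential, the behaviour of the Clarke-subgradient term under \eqref{assum:j1}--\eqref{assum:j2}, and the identification of the strong-monotonicity constant $m_A-\alpha_j c_j^2$. Everything else, in particular the uniqueness argument and the contraction estimate, is a routine consequence of strong monotonicity of $A$ and the smallness condition \eqref{small}.
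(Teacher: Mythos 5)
Your proposal is correct and follows essentially the same route as the proof the paper relies on: the paper does not reprove Theorem~\ref{t1} but defers to \cite{MOS30} (as adapted in \cite{Han17}), whose argument is exactly your scheme of uniqueness via strong monotonicity plus the smallness condition \eqref{small}, existence for the frozen-first-argument auxiliary problem via surjectivity of coercive pseudomonotone multivalued operators, and a Banach fixed point for $\Lambda w=\gamma_\varphi u_w$. Both the uniqueness computation and the contraction estimate, including the constant $\alpha_\varphi c_\varphi^2/(m_A-\alpha_j c_j^2)<1$, match the cited argument.
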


We keep assumptions $(A_1)$--$(A_8)$ throughout the paper so that a unique solution $u\in K$
is guaranteed for Problem (P).  

\section{Numerical approximations}\label{s3}

We now introduce a penalty based numerical method of Problem (P). We say that $P:X\to X^*$ is 
a penalty operator for the set $K$ if $P$ is bounded, demicontinuous, monotone, and ${\rm Ker}(P)=K$.
Note that a penalty operator thus defined is pseudomonotone, following \cite[Prop.\ 27.6]{ZeidIIB}.
We denote by $\epsilon>0$ the penalty parameter.  The penalty formulation of Problem (P) is as follows.

\smallskip

\noindent {\sc Problem} (P${}_\epsilon$). 
{\it Find an element  $u_\epsilon \in X$ such that}
\begin{align*}
& \langle Au_\epsilon,v-u_\epsilon\rangle+\frac{1}{\epsilon}\langle P u_\epsilon,v-u_\epsilon\rangle
+\varphi(\gamma_\varphi u_\epsilon,\gamma_\varphi v)-\varphi(\gamma_\varphi u_\epsilon,\gamma_\varphi u_\epsilon)\\
&\qquad{} + j^0(\gamma_j u_\epsilon;\gamma_j v -\gamma_j u_\epsilon)\ge \langle f, v - u_\epsilon \rangle
\quad\forall\, v \in X.
\end{align*}

For the particular case where $\varphi$ does not depend on its first argument, it is shown in
\cite{MOS30} that Problem (P${}_\epsilon$) has a unique solution $u_\epsilon \in X$ and that
$u_\epsilon \to u$ in $X$ as $\epsilon\to 0$.  In this paper, we consider numerical methods 
for solving Problem (P) based on the penalty formulation.  

Let $X^h\subset X$ be a finite dimensional subspace with $h>0$ being a spatial discretization parameter.  
In practice, $X^h$ is usually constructed as a finite element space.  We need an assumption 
on the approximability of elements of $K$ by elements in $X^h$.

\smallskip
$(A_9)$  For any $v\in X$, there exists $v^h\in X^h$ such that
\[ \lim_{h\to 0}\|v^h-v\|_X=0.\]
For any $v\in K$, there exists $v^h\in X^h\cap K$ such that
\begin{equation}
\lim_{h\to 0}\|v^h-v\|_X=0. \label{eq4}
\end{equation}

Then the numerical method for solving Problem (P) based on penalty formulation is the following.

\smallskip
\noindent {\sc Problem} (P${}^h_\epsilon$). 
{\it Find an element  $u^h_\epsilon\in X^h$ such that}
\begin{align}
&\langle Au^h_\epsilon,v^h-u^h_\epsilon\rangle+\frac{1}{\epsilon}\langle P u^h_\epsilon,v^h-u^h_\epsilon\rangle
+ \varphi(\gamma_\varphi u^h_\epsilon,\gamma_\varphi v^h)
- \varphi(\gamma_\varphi u^h_\epsilon,\gamma_\varphi u^h_\epsilon) \nonumber\\
&\qquad{} +j^0(\gamma_j u^h_\epsilon;\gamma_j v^h -\gamma_j u^h_\epsilon)\ge\langle f,v^h-u^h_\epsilon\rangle
\quad\forall\,v^h\in X^h. \label{eq2}
\end{align}

We can apply the arguments of the proof of Theorem \ref{t1} in the setting
of the finite dimensional space $X^h$ to conclude that under assumptions $(A_1)$--$(A_8)$,
Problem (P${}^h_\epsilon$) has a unique solution $u^h_\epsilon\in X^h$.  The main goal of the paper
is to show convergence of $u^h_\epsilon$ to the solution $u\in K$ of Problem (P) as $h$ and $\epsilon$ 
simultaneously and independently approach zero.   As a preparation for the convergence analysis of 
the numerical method in Section \ref{sec:conver},  we first prove a uniform boundedness property for 
the numerical solutions $\{u^h_\epsilon\}_{h,\epsilon>0}$.  Since $K$ is non-empty, we choose and 
fix an element $u_0$ from $K$.  By assumption $(A_9)$, there exists $u^h_0\in X^h\cap K$ such that
\begin{equation}
\lim_{h\to 0}\|u_0^h-u_0\|_X=0. \label{eq4u}
\end{equation}

We will need the following uniform boundedness property of the numerical solutions.

\begin{proposition}\label{prop:bd}
There exists a constant $M>0$ such that $\|u^h_\epsilon\|_X\le M$ $\forall\,h,\epsilon>0$.
\end{proposition}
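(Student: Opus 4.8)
The plan is to test the variational inequality \eqref{eq2} with the fixed admissible element $u_0^h\in X^h\cap K$ from \eqref{eq4u} and derive a coercivity estimate in which all terms can be bounded by $C(1+\|u^h_\epsilon\|_X)$ or absorbed into a leading $\|u^h_\epsilon\|_X^2$ term. Writing $v^h=u_0^h$ in \eqref{eq2} gives
\[
\langle Au^h_\epsilon,u^h_\epsilon-u_0^h\rangle
+\frac{1}{\epsilon}\langle Pu^h_\epsilon,u^h_\epsilon-u_0^h\rangle
\le \langle f,u^h_\epsilon-u_0^h\rangle
+\varphi(\gamma_\varphi u^h_\epsilon,\gamma_\varphi u_0^h)-\varphi(\gamma_\varphi u^h_\epsilon,\gamma_\varphi u^h_\epsilon)
+j^0(\gamma_j u^h_\epsilon;\gamma_j u_0^h-\gamma_j u^h_\epsilon).
\]
The key point is the penalty term. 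Since $u_0^h\in K={\rm Ker}(P)$ we have $Pu_0^h=0$, and monotonicity of $P$ yields $\langle Pu^h_\epsilon,u^h_\epsilon-u_0^h\rangle=\langle Pu^h_\epsilon-Pu_0^h,u^h_\epsilon-u_0^h\rangle\ge 0$, so that term may simply be dropped from the left-hand side (it has the favorable sign), eliminating the dependence on $\epsilon$.

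Next I would bound each remaining term. For $A$, strong monotonicity \eqref{assum:A} gives $\langle Au^h_\epsilon-Au_0^h,u^h_\epsilon-u_0^h\rangle\ge m_A\|u^h_\epsilon-u_0^h\|_X^2$, hence $\langle Au^h_\epsilon,u^h_\epsilon-u_0^h\rangle\ge m_A\|u^h_\epsilon-u_0^h\|_X^2+\langle Au_0^h,u^h_\epsilon-u_0^h\rangle$; since $\{u_0^h\}$ converges, $\{Au_0^h\}$ is bounded (pseudomonotone operators are bounded), so $\langle Au_0^h,u^h_\epsilon-u_0^h\rangle\ge -C\|u^h_\epsilon-u_0^h\|_X$. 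For the $\varphi$ terms I would apply \eqref{assum:phi} with $z_1=z_2=\gamma_\varphi u^h_\epsilon$ — but that gives $0$, so instead the right approach is to write $\varphi(\gamma_\varphi u^h_\epsilon,\gamma_\varphi u_0^h)-\varphi(\gamma_\varphi u^h_\epsilon,\gamma_\varphi u^h_\epsilon)=[\varphi(\gamma_\varphi u^h_\epsilon,\gamma_\varphi u_0^h)-\varphi(\gamma_\varphi u_0^h,\gamma_\varphi u_0^h)]+[\varphi(\gamma_\varphi u_0^h,\gamma_\varphi u_0^h)-\varphi(\gamma_\varphi u^h_\epsilon,\gamma_\varphi u^h_\epsilon)]$ and use \eqref{assum:phi} on the first bracket (with $z_3=z_4$ replaced appropriately: actually with $z_1=\gamma_\varphi u^h_\epsilon$, $z_2=\gamma_\varphi u_0^h$, $z_3=\gamma_\varphi u^h_\epsilon$, $z_4=\gamma_\varphi u_0^h$) to get a bound $\alpha_\varphi\|\gamma_\varphi(u^h_\epsilon-u_0^h)\|_{X_\varphi}^2$ plus the difference $\varphi(\gamma_\varphi u_0^h,\gamma_\varphi u^h_\epsilon)-\varphi(\gamma_\varphi u^h_\epsilon,\gamma_\varphi u^h_\epsilon)$, wait — the cleaner route is simply: \eqref{assum:phi} with $(z_1,z_2,z_3,z_4)=(\gamma_\varphi u^h_\epsilon,\gamma_\varphi u_0^h,\gamma_\varphi u^h_\epsilon,\gamma_\varphi u_0^h)$ bounds $\varphi(\gamma_\varphi u^h_\epsilon,\gamma_\varphi u_0^h)-\varphi(\gamma_\varphi u^h_\epsilon,\gamma_\varphi u^h_\epsilon)+\varphi(\gamma_\varphi u_0^h,\gamma_\varphi u^h_\epsilon)-\varphi(\gamma_\varphi u_0^h,\gamma_\varphi u_0^h)$ by $\alpha_\varphi\|\gamma_\varphi(u^h_\epsilon-u_0^h)\|_{X_\varphi}^2$, and then the remaining convex function $z\mapsto\varphi(\gamma_\varphi u_0^h,z)$ is l.s.c.\ convex on $X_\varphi$, so by Lemma~\ref{lem:lower} it is bounded below by an affine functional, giving $\varphi(\gamma_\varphi u_0^h,\gamma_\varphi u_0^h)-\varphi(\gamma_\varphi u_0^h,\gamma_\varphi u^h_\epsilon)\le C(1+\|\gamma_\varphi u^h_\epsilon\|_{X_\varphi})\le C(1+\|u^h_\epsilon\|_X)$ using \eqref{Xphi}. (One must also check $\varphi(\gamma_\varphi u_0^h,\gamma_\varphi u_0^h)$ stays bounded as $h\to0$, which follows from continuity of $\varphi(\gamma_\varphi u_0,\cdot)$ via Lemma~\ref{lem:continuity} and boundedness of $\{\gamma_\varphi u_0^h\}$ — a mild technical point, or one can just use $z_3=z_4=\gamma_\varphi u_0$ at the continuous level and add a vanishing perturbation.)

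For the Clarke term, \eqref{assum:j2} with $z_1=\gamma_j u^h_\epsilon$, $z_2=\gamma_j u_0^h$ gives $j^0(\gamma_j u^h_\epsilon;\gamma_j u_0^h-\gamma_j u^h_\epsilon)\le \alpha_j\|\gamma_j(u^h_\epsilon-u_0^h)\|_{X_j}^2-j^0(\gamma_j u_0^h;\gamma_j u^h_\epsilon-\gamma_j u_0^h)$, and the last term is controlled by the growth bound \eqref{assum:j1}: $|j^0(\gamma_j u_0^h;\gamma_j u^h_\epsilon-\gamma_j u_0^h)|\le(c_0+c_1\|\gamma_j u_0^h\|_{X_j})\|\gamma_j(u^h_\epsilon-u_0^h)\|_{X_j}\le C(1+\|u^h_\epsilon\|_X)$ via \eqref{Xj}. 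Finally $\langle f,u^h_\epsilon-u_0^h\rangle\le\|f\|_{X^*}\|u^h_\epsilon-u_0^h\|_X$. Collecting everything, writing $w:=u^h_\epsilon-u_0^h$ and using \eqref{Xphi}, \eqref{Xj} to convert $\|\gamma_\varphi w\|_{X_\varphi}^2\le c_\varphi^2\|w\|_X^2$ and $\|\gamma_j w\|_{X_j}^2\le c_j^2\|w\|_X^2$, we obtain
\[
m_A\|w\|_X^2\le (\alpha_\varphi c_\varphi^2+\alpha_j c_j^2)\|w\|_X^2+C(1+\|w\|_X),
\]
so $(m_A-\alpha_\varphi c_\varphi^2-\alpha_j c_j^2)\|w\|_X^2\le C(1+\|w\|_X)$, and by the smallness condition $(A_7)$ the coefficient on the left is positive, whence $\|w\|_X$ is bounded uniformly in $h,\epsilon$ (a quadratic-in-$\|w\|_X$ inequality with positive leading coefficient). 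Since $\{u_0^h\}$ is bounded, $\|u^h_\epsilon\|_X\le\|w\|_X+\|u_0^h\|_X\le M$ for some $M$ independent of $h,\epsilon$. The main obstacle is the careful, sign-correct manipulation of the $\varphi$ difference — one must isolate a single convex function of $\gamma_\varphi u^h_\epsilon$ whose lower bound via Lemma~\ref{lem:lower} yields only linear growth, so that it is absorbed by the $m_A\|w\|_X^2$ term rather than competing with it; the penalty term itself, by contrast, is trivially handled because it has the right sign and vanishes on $K$.
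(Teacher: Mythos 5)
Your overall strategy is the paper's: test \eqref{eq2} with $v^h=u_0^h$, discard the penalty term using $Pu_0^h=0$ and the monotonicity of $P$, use strong monotonicity of $A$ for coercivity, estimate the $j^0$ terms via \eqref{assum:j2} and \eqref{assum:j1}, and close with the smallness condition $(A_7)$. The $j^0$, $f$, and $Au_0^h$ estimates are all fine.

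There is, however, one genuine gap, in exactly the place you flag as "the main obstacle." You apply \eqref{assum:phi} with $z_2=\gamma_\varphi u_0^h$, which leaves the residual term $\varphi(\gamma_\varphi u_0^h,\gamma_\varphi u_0^h)-\varphi(\gamma_\varphi u_0^h,\gamma_\varphi u_\epsilon^h)$, and you then invoke Lemma~\ref{lem:lower} for the convex function $z\mapsto\varphi(\gamma_\varphi u_0^h,z)$. But the affine minorant in Lemma~\ref{lem:lower} has constants that depend on the convex function itself, and here that function changes with $h$; so the resulting bound $-\varphi(\gamma_\varphi u_0^h,\gamma_\varphi u_\epsilon^h)\le C(1+\|u_\epsilon^h\|_X)$ has a constant $C=C(h)$ with no a priori uniform control, which defeats the purpose of the proposition. (Your parenthetical only worries about the bounded term $\varphi(\gamma_\varphi u_0^h,\gamma_\varphi u_0^h)$, not about this unbounded one, which is the critical term.) The paper avoids this by taking $z_2=\gamma_\varphi u_0$, the \emph{fixed} element of $K$, in \eqref{assum:phi} (see \eqref{4.5a}): the residual convex function is then the single, $h$-independent function $\varphi(\gamma_\varphi u_0,\cdot)$, to which Lemma~\ref{lem:lower} applies with fixed constants, while $\varphi(\gamma_\varphi u_0,\gamma_\varphi u_0^h)$ is bounded by Lemma~\ref{lem:continuity}; the price is the cross term $\alpha_\varphi c_\varphi^2\|u_0^h-u_0\|_X\|u_\epsilon^h-u_0^h\|_X$, absorbed via the modified Cauchy inequality \eqref{eq2.2}. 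Your own aside about working "at the continuous level with $\gamma_\varphi u_0$" is indeed the right repair, but it must actually be carried out (one further application of \eqref{assum:phi} with $z_1=\gamma_\varphi u_0^h$, $z_2=\gamma_\varphi u_0$ converts your residual into the paper's); as written the uniformity in $h$ is not established.
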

\begin{proof}
By the strong monotonicity of $A$, we have 
\[ m_A\|u^h_\epsilon-u_0^h\|_X^2\le \langle A u^h_\epsilon,u^h_\epsilon-u_0^h\rangle
 -\langle A u^h_0,u^h_\epsilon-u_0^h\rangle. \]
Applying \eqref{eq2} with $v^h=u^h_0$, we further have
\begin{align}
m_A\|u^h_\epsilon-u_0^h\|_X^2 & \le \frac{1}{\epsilon}\langle P u^h_\epsilon,u^h_0-u^h_\epsilon\rangle
+ \varphi(\gamma_\varphi u^h_\epsilon,\gamma_\varphi u^h_0)
- \varphi(\gamma_\varphi u^h_\epsilon,\gamma_\varphi u^h_\epsilon) 
+j^0(\gamma_j u^h_\epsilon;\gamma_j u^h_0 -\gamma_j u^h_\epsilon) \nonumber\\
&\quad{} -\langle f,u^h_0-u^h_\epsilon\rangle-\langle A u^h_0,u^h_\epsilon-u_0^h\rangle. \label{eq5}
\end{align}

Let us bound various terms on the right side of \eqref{eq5}.  First, since $u^h_0\in K$, 
$P u^h_0=0$, and by the monotonicity of $P$, we have
\begin{equation}
\frac{1}{\epsilon}\langle P u^h_\epsilon,u^h_0-u^h_\epsilon\rangle =
-\frac{1}{\epsilon}\langle P u^h_\epsilon- P u^h_0,u^h_\epsilon-u^h_0\rangle \le 0.
\label{eq6}
\end{equation}

In \eqref{assum:phi} we take $z_1=z_3=\gamma_\varphi u^h_\epsilon$, $z_2=\gamma_\varphi u_0$
and $z_4=\gamma_\varphi u^h_0$ to get
\begin{align}
\varphi(\gamma_\varphi u^h_\epsilon,\gamma_\varphi u^h_0)
- \varphi(\gamma_\varphi u^h_\epsilon,\gamma_\varphi u^h_\epsilon)  
& \le \alpha_\varphi \|\gamma_\varphi (u^h_\epsilon-u_0)\|_{X_\varphi}
  \|\gamma_\varphi (u^h_\epsilon-u^h_0)\|_{X_\varphi}\nonumber\\
&{}\quad +\varphi(\gamma_\varphi u_0,\gamma_\varphi u^h_0)-\varphi(\gamma_\varphi u_0,\gamma_\varphi u^h_\epsilon).
\label{4.5a}
\end{align}
Now,
\[ \|\gamma_\varphi (u^h_\epsilon-u_0)\|_{X_\varphi} \|\gamma_\varphi (u^h_\epsilon-u^h_0)\|_{X_\varphi}
 \le c_\varphi^2 \left(\|u^h_\epsilon-u^h_0\|_X^2+\|u^h_0-u_0\|_X\|u^h_\epsilon-u^h_0\|_X\right).\]
By the modified Cauchy inequality, for any $\delta>0$, there is a constant $c$ depending on 
$\delta$ such that
\[ \alpha_\varphi c_\varphi^2\|u^h_0-u_0\|_X\|u^h_\epsilon-u^h_0\|_X\le \delta\, \|u^h_\epsilon-u^h_0\|_X^2
 +c\,\|u^h_0-u_0\|_X^2. \]
By $(A_{5})$, $\varphi(\gamma_\varphi u_0,\cdot)\colon X\to\real$ is convex and l.s.c.  So from 
\eqref{eq2.1}, for some constants $\overline{c}$ and $\tilde{c}$,
\[ \varphi(\gamma_\varphi u_0,z)\ge \overline{c}+\tilde{c}\,\|z\|_{X_\varphi}\quad\forall\,z\in X_\varphi, \]
and then
\[ -\varphi(\gamma_\varphi u_0,\gamma_\varphi u^h_\epsilon)
\le -\overline{c}-\tilde{c}\,\|\gamma_\varphi u^h_\epsilon\|_{X_\varphi}.\]
By the continuity from Lemma \ref{lem:continuity},
\[ \varphi(\gamma_\varphi u_0,\gamma_\varphi u^h_0)\to \varphi(\gamma_\varphi u_0,\gamma_\varphi u_0)\]
and so $\{\varphi(\gamma_\varphi u_0,\gamma_\varphi u^h_0)\}_{h>0}$ is uniformly bounded with respect to $h$.
Summarizing the above relations with standard manipulations, we have
\begin{equation}
\varphi(\gamma_\varphi u^h_\epsilon,\gamma_\varphi u^h_0)
- \varphi(\gamma_\varphi u^h_\epsilon,\gamma_\varphi u^h_\epsilon)
\le \left(\alpha_\varphi c_\varphi^2+\delta\right)\|u^h_\epsilon-u^h_0\|_X^2
+c\left(1+\|u^h_\epsilon-u^h_0\|_X\right).
\label{eq7}
\end{equation}

Write
\[ j^0(\gamma_j u^h_\epsilon;\gamma_j u^h_0 -\gamma_j u^h_\epsilon)
 =\left[ j^0(\gamma_j u^h_\epsilon;\gamma_j u^h_0 -\gamma_j u^h_\epsilon)
 +j^0(\gamma_j u^h_0;\gamma_j u^h_\epsilon-\gamma_j u^h_0)\right]
 -j^0(\gamma_j u^h_0;\gamma_j u^h_\epsilon-\gamma_j u^h_0). \]
Use the condition \eqref{assum:j2},
\[ j^0(\gamma_j u^h_\epsilon;\gamma_j u^h_0 -\gamma_j u^h_\epsilon)
 +j^0(\gamma_j u^h_0;\gamma_j u^h_\epsilon-\gamma_j u^h_0) \le \alpha_j c_j^2 \|u^h_\epsilon-u^h_0\|_X^2.\]
Use the condition \eqref{assum:j1},
\[ -j^0(\gamma_j u^h_0;\gamma_j u^h_\epsilon-\gamma_j u^h_0)\le 
 \left(c_0+c_1 c_j \|u^h_0\|_X\right) c_j \|u^h_\epsilon-u^h_0\|_X. \]
Thus,
\begin{equation}
j^0(\gamma_j u^h_\epsilon;\gamma_j u^h_0 -\gamma_j u^h_\epsilon)\le 
\alpha_j c_j^2 \|u^h_\epsilon-u^h_0\|_X^2+c\left(1+\|u^h_\epsilon-u^h_0\|_X\right).
\label{eq8}
\end{equation}

Easily,
\begin{equation}
-\langle f,u^h_0-u^h_\epsilon\rangle\le \|f\|_{X^*} \|u^h_\epsilon-u^h_0\|_X.
\label{eq9}
\end{equation}

By the assumption, $A:X\to X^*$ is pseumonotone; hence it is bounded.  Then, since $u^h_0\weak u_0$,
it follows that $\{\|Au^h_0\|_{X^*}\}_{h>0}$ is uniformly bounded with respect to $h$.  Thus,
\begin{equation}
 -\langle A u^h_0,u^h_\epsilon-u_0^h\rangle \le c\,\|u^h_\epsilon-u^h_0\|_X.
\label{eq10}
\end{equation}
 
Using \eqref{eq6}--\eqref{eq10} in \eqref{eq5}, we find that
\[ \left(m_A-\alpha_\varphi c_\varphi^2-\alpha_j c_j^2-\delta\right)\|u^h_\epsilon-u^h_0\|_X^2
 \le c\left(1+\|u^h_\epsilon-u^h_0\|_X\right). \]
Therefore, choosing $\delta>0$ sufficiently small, we find that $\{\|u^h_\epsilon-u^h_0\|_X\}_{h,\epsilon>0}$, 
and then also $\{\|u^h_\epsilon\|_X\}_{h,\epsilon>0}$, is uniformly bounded with respect to $h$ and $\epsilon$. 
\hfill
\end{proof}

We list an additional condition to be used later on:

$(A_{10})$ $\gamma_\varphi\colon X\to X_\varphi$ and $\gamma_j\colon X\to X_j$ are compact operators.

We comment that in applications to contact mechanics (cf.\ Section \ref{s4}), $\gamma_\varphi$ and 
$\gamma_j$ are trace operators from an $H^1(\Omega)$-based space to
$L^2(\Gamma_3)$-based spaces and, therefore, the assumption $(A_{10})$ is automatically valid.

\section{Convergence of the numerical method}\label{sec:conver}

We now prove the convergence of the numerical solution of Problem (P${}^h_\epsilon$) 
to that of Problem (P) as the penalty parameter $\epsilon$ and the meshsize $h$ tend to zero.

\begin{theorem}\label{thm:main}
Assume $(A_1)$--$(A_{10})$.  Then,
\begin{equation}
 u^h_\epsilon\to u\quad{\rm in\ }X\ {\rm as\ }h,\epsilon\to 0. \label{4.9a}
\end{equation}
\end{theorem}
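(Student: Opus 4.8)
The plan is to follow the classical penalty-plus-numerical-approximation argument, exploiting the uniform bound from Proposition~\ref{prop:bd} together with the reflexivity of $X$ and the compactness in $(A_{10})$. First, since $\{u^h_\epsilon\}_{h,\epsilon>0}$ is bounded in the reflexive Banach space $X$, along any sequence $h,\epsilon\to 0$ we can extract a subsequence, still denoted $u^h_\epsilon$, with $u^h_\epsilon\weak \widetilde u$ in $X$ for some $\widetilde u\in X$; by $(A_{10})$, $\gamma_\varphi u^h_\epsilon\to\gamma_\varphi\widetilde u$ in $X_\varphi$ and $\gamma_j u^h_\epsilon\to\gamma_j\widetilde u$ in $X_j$. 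The first substep is to show $\widetilde u\in K$: test \eqref{eq2} with an arbitrary $v^h\in X^h\cap K$ approximating a fixed $w\in K$ (so $Pv^h=0$), use monotonicity of $P$ to get $\langle Pu^h_\epsilon,u^h_\epsilon-v^h\rangle\ge 0$, hence $\tfrac1\epsilon\langle Pu^h_\epsilon,u^h_\epsilon-v^h\rangle\ge 0$; rearranging \eqref{eq2} shows the remaining terms (which are bounded uniformly in $h,\epsilon$ thanks to the uniform bound, the growth conditions \eqref{assum:j1}, boundedness of $A$, and the affine-minorant bound \eqref{eq2.1}) dominate $\tfrac1\epsilon\langle Pu^h_\epsilon,u^h_\epsilon-v^h\rangle$ from above, so $\langle Pu^h_\epsilon,u^h_\epsilon-v^h\rangle\le C\epsilon\to 0$. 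Passing to the limit, using demicontinuity/monotonicity of $P$ in a Minty-type step and $(A_9)$ to let $v^h\to w$, one deduces $\langle P\widetilde u,\widetilde u-w\rangle\le 0$ for all $w\in K$; taking $w=u_0\in K$ and a standard argument (or: since $P$ is monotone with $\mathrm{Ker}(P)=K$, this forces $P\widetilde u=0$, i.e.\ $\widetilde u\in K$). More carefully: $\langle Pu^h_\epsilon - Pv^h, u^h_\epsilon-v^h\rangle\ge 0$ and $Pv^h=0$ give a sign; combined with boundedness of $P$ one extracts $Pu^h_\epsilon\weak\chi$ and shows $\chi=P\widetilde u$ by pseudomonotonicity of $P$, then $\langle P\widetilde u,\widetilde u-w\rangle\le 0\ \forall w\in K$, and choosing $w$ near $\widetilde u$ in $K$ is not directly possible, so instead use $\langle Pw',w'-\widetilde u\rangle\ge\langle P\widetilde u,w'-\widetilde u\rangle$... the cleanest route is: monotonicity of $P$ and $Pw=0$ for $w\in K$ give $\langle P\widetilde u,\widetilde u-w\rangle\le 0$; but also testing the original inequality structure, one shows $\langle P\widetilde u,\widetilde u - w\rangle \ge 0$ is false in general — so I will instead argue directly that $\|Pu^h_\epsilon\|_{X^*}$ bounded and $\langle Pu^h_\epsilon,u^h_\epsilon-v^h\rangle\to 0$ imply, via pseudomonotonicity of $P$ (Prop.~27.6 of \cite{ZeidIIB}) applied on the sequence, that $\langle P\widetilde u,\widetilde u-v\rangle\le\liminf\langle Pu^h_\epsilon,u^h_\epsilon-v\rangle$; taking $v=\widetilde u$ after approximating shows $P\widetilde u=0$.

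The second substep is to show $\widetilde u$ solves Problem~(P). Fix $v\in K$ and pick $v^h\in X^h\cap K$ with $v^h\to v$ (by $(A_9)$). Again $Pv^h=0$, so monotonicity of $P$ gives $-\tfrac1\epsilon\langle Pu^h_\epsilon, v^h-u^h_\epsilon\rangle = \tfrac1\epsilon\langle Pu^h_\epsilon-Pv^h, u^h_\epsilon-v^h\rangle \ge 0$, whence from \eqref{eq2}
\[
\langle Au^h_\epsilon, u^h_\epsilon - v^h\rangle \le \varphi(\gamma_\varphi u^h_\epsilon,\gamma_\varphi v^h)-\varphi(\gamma_\varphi u^h_\epsilon,\gamma_\varphi u^h_\epsilon)+j^0(\gamma_j u^h_\epsilon;\gamma_j v^h-\gamma_j u^h_\epsilon)-\langle f, v^h-u^h_\epsilon\rangle.
\]
Taking $\limsup$: the $\varphi$-terms are handled by continuity of $\varphi(z,\cdot)$ (Lemma~\ref{lem:continuity}), joint estimate \eqref{assum:phi} to control the dependence on the first argument via $\gamma_\varphi u^h_\epsilon\to\gamma_\varphi\widetilde u$ strongly; the $j^0$-term is upper semicontinuous in both arguments (the standard weak/strong u.s.c.\ property of the Clarke directional derivative, valid here because $\gamma_j u^h_\epsilon\to\gamma_j\widetilde u$ strongly in $X_j$); the $f$-term converges. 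This yields $\limsup_{h,\epsilon\to0}\langle Au^h_\epsilon, u^h_\epsilon-v^h\rangle\le \varphi(\gamma_\varphi\widetilde u,\gamma_\varphi v)-\varphi(\gamma_\varphi\widetilde u,\gamma_\varphi\widetilde u)+j^0(\gamma_j\widetilde u;\gamma_j v-\gamma_j\widetilde u)-\langle f,v-\widetilde u\rangle$. Taking $v^h\to\widetilde u$ as a special test (possible by $(A_9)$ since $\widetilde u\in K$) and using that all the right-hand terms vanish in that case gives $\limsup\langle Au^h_\epsilon, u^h_\epsilon-\widetilde u\rangle\le 0$; pseudomonotonicity of $A$ then gives $\langle A\widetilde u,\widetilde u-v\rangle\le\liminf\langle Au^h_\epsilon,u^h_\epsilon-v\rangle$ for all $v\in X$, and in particular for $v\in K$. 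Combining with the $\limsup$ inequality above (writing $\langle Au^h_\epsilon,u^h_\epsilon-v^h\rangle=\langle Au^h_\epsilon,u^h_\epsilon-\widetilde u\rangle+\langle Au^h_\epsilon,\widetilde u-v^h\rangle$ and noting the last term $\to 0$ since $Au^h_\epsilon$ is bounded and $\widetilde u-v^h\to\widetilde u-v$... actually $\to\widetilde u - v$, handled by weak convergence) shows $\widetilde u$ satisfies \eqref{hv}. By Theorem~\ref{t1} the solution is unique, so $\widetilde u=u$, and since every subsequence has a further subsequence converging weakly to the same $u$, the whole family satisfies $u^h_\epsilon\weak u$.

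The third substep upgrades weak to strong convergence. Use strong monotonicity $(A_4)$: $m_A\|u^h_\epsilon-u\|_X^2\le\langle Au^h_\epsilon-Au, u^h_\epsilon-u\rangle$. The term $\langle Au, u^h_\epsilon-u\rangle\to 0$ by weak convergence. For $\langle Au^h_\epsilon, u^h_\epsilon-u\rangle$, take $v^h\in X^h\cap K$ with $v^h\to u$ and write $\langle Au^h_\epsilon, u^h_\epsilon-u\rangle=\langle Au^h_\epsilon, u^h_\epsilon-v^h\rangle+\langle Au^h_\epsilon, v^h-u\rangle$; the second term $\to 0$ (bounded times strongly null), and for the first we already showed $\limsup\langle Au^h_\epsilon,u^h_\epsilon-v^h\rangle\le 0$ when $v^h\to u$ (all right-hand terms vanish in the limit). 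Hence $\limsup_{h,\epsilon\to0} m_A\|u^h_\epsilon-u\|_X^2\le 0$, giving \eqref{4.9a}.

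\textbf{Main obstacle.} The delicate point is the treatment of the penalty term: one must argue that $\tfrac1\epsilon\langle Pu^h_\epsilon, u^h_\epsilon-v^h\rangle$ (with $v^h\in X^h\cap K$) has a favorable sign \emph{and} simultaneously extract, by letting $\epsilon\to 0$, that $\langle Pu^h_\epsilon, u^h_\epsilon-v^h\rangle\to 0$ so that in the limit $\widetilde u\in K$ — all while $h$ varies independently, so that the approximation $v^h\in X^h\cap K$ provided by $(A_9)$ is moving as well. Keeping these two limits coordinated, and correctly invoking pseudomonotonicity of the penalty operator $P$ on the varying sequence to conclude $P\widetilde u=0$, is where care is required; everything else is the standard pseudomonotone/u.s.c.\ passage to the limit.
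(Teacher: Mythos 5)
Your overall architecture coincides with the paper's: uniform boundedness from Proposition~\ref{prop:bd}, extraction of a subsequence with $u^h_\epsilon\weak\widetilde u$ and, via $(A_{10})$, strong convergence of $\gamma_\varphi u^h_\epsilon$ and $\gamma_j u^h_\epsilon$; then membership of the limit in $K$, identification of the limit through pseudomonotonicity of $A$ and density of $\{X^h\cap K\}$ in $K$, and the upgrade to strong convergence via strong monotonicity. Your second and third substeps are essentially the paper's argument and are sound.

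The genuine gap is in the step $\widetilde u\in K$, exactly where you flag the ``main obstacle.'' By testing \eqref{eq2} only with $v^h\in X^h\cap K$ you obtain $0\le\langle Pu^h_\epsilon,u^h_\epsilon-v^h\rangle\le C\epsilon$ and hence $\limsup\langle Pu^h_\epsilon,u^h_\epsilon-v\rangle\le 0$ only for $v\in K$. This is insufficient twice over. First, pseudomonotonicity of $P$ needs the hypothesis $\limsup\langle Pu^h_\epsilon,u^h_\epsilon-\widetilde u\rangle\le 0$, and since you do not yet know $\widetilde u\in K$ you cannot take $v=\widetilde u$; the decomposition $\langle Pu^h_\epsilon,u^h_\epsilon-\widetilde u\rangle=\langle Pu^h_\epsilon,u^h_\epsilon-v^h\rangle+\langle Pu^h_\epsilon,v^h-\widetilde u\rangle$ leaves a second term with no sign. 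Second, even granting $\langle P\widetilde u,\widetilde u-v\rangle\le 0$ for $v\in K$ only, combining with monotonicity of $P$ merely gives $\langle P\widetilde u,\widetilde u-w\rangle=0$ for all $w\in K$, which does not force $P\widetilde u=0$; and your closing move ``taking $v=\widetilde u$'' reduces to $0\le\liminf\langle Pu^h_\epsilon,u^h_\epsilon-\widetilde u\rangle$, which is vacuous. The fix --- and what the paper does --- is to invoke the \emph{first} half of $(A_9)$ (density of $\bigcup_h X^h$ in all of $X$) and test \eqref{eq2} with arbitrary $v^h\in X^h$, not only those in $K$: no sign of the penalty term is required, because rearranging \eqref{eq2} and using Proposition~\ref{prop:bd}, \eqref{eq2.1}, \eqref{assum:j1} and the boundedness of $A$ already yields $\frac{1}{\epsilon}\langle Pu^h_\epsilon,u^h_\epsilon-v^h\rangle\le c(v^h)$ uniformly in $\epsilon$. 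Multiplying by $\epsilon$ and using the uniform bound on $\|Pu^h_\epsilon\|_{X^*}$ gives $\limsup\langle Pu^h_\epsilon,u^h_\epsilon-v\rangle\le 0$ for every $v\in X$; this both licenses the pseudomonotonicity of $P$ at $v=\widetilde u$ and, letting $v$ range over all of $X$, forces $\langle P\widetilde u,z\rangle=0$ for all $z\in X$, i.e., $P\widetilde u=0$ and $\widetilde u\in K$.
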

\begin{proof}
By Proposition \ref{prop:bd},  $\{u^h_\epsilon\}_{h,\epsilon>0}$ is bounded in $X$.  Since $X$ 
is a reflexive Banach space, and the operators $\gamma_\varphi\colon X\to X_\varphi$ and 
$\gamma_j\colon X\to X_j$ are compact, we can find a sequence of $\{u^h_\epsilon\}_{h,\epsilon>0}$,
still denoted by $\{u^h_\epsilon\}$, and an element $w\in X$ such that
\begin{equation}
 u^h_\epsilon\weak w\ {\rm in}\ X,\quad \gamma_\varphi u^h_\epsilon\to \gamma_\varphi w\ {\rm in}\ X_\varphi,
 \quad \gamma_j u^h_\epsilon\to \gamma_j w\ {\rm in}\ X_j. 
\label{eq11}
\end{equation}

Let us show that $w\in K$.  By \eqref{eq2}, for any $v^h\in X^h$,
\begin{align*}
\frac{1}{\epsilon}\langle P u^h_\epsilon,u^h_\epsilon-v^h\rangle
&\le \langle Au^h_\epsilon, v^h - u^h_\epsilon\rangle + \varphi(\gamma_\varphi u^h_\epsilon,\gamma_\varphi v^h)
- \varphi(\gamma_\varphi u^h_\epsilon,\gamma_\varphi u^h_\epsilon)\\
&{}\quad +j^0(\gamma_j u^h_\epsilon;\gamma_j v^h -\gamma_j u^h_\epsilon)-\langle f,v^h-u^h_\epsilon\rangle.
\end{align*}
Similar to \eqref{4.5a}, we have
\[  \varphi(\gamma_\varphi u^h_\epsilon,\gamma_\varphi v^h)
- \varphi(\gamma_\varphi u^h_\epsilon,\gamma_\varphi u^h_\epsilon)  
\le \alpha_\varphi c_\varphi^2 \|u^h_\epsilon-v^h\|_{X_\varphi}^2
+\varphi(\gamma_\varphi v^h,\gamma_\varphi v^h)-\varphi(\gamma_\varphi v^h,\gamma_\varphi u^h_\epsilon). \]
Also, by \eqref{eq2.1}, we have two constants $\overline{c}$ and $\tilde{c}$, dependent on $v^h$ 
but independent of $u^h_\epsilon$ such that
\[ -\varphi(\gamma_\varphi v^h,\gamma_\varphi u^h_\epsilon)\le -\overline{c}-\tilde{c}\,\|u^h_\epsilon\|_X. \]
Hence, for any fixed $v^h\in X^h$, there is a constant $c(v^h)$, dependent on $v^h$ but independent of 
$\epsilon$, such that
\[ \frac{1}{\epsilon}\langle P u^h_\epsilon,u^h_\epsilon-v^h\rangle\le c(v^h). \]
Then, we deduce that
\begin{equation}
\limsup_{\epsilon\to 0}\langle P u^h_\epsilon,u^h_\epsilon-v^h\rangle \le 0\quad\forall\,v^h\in X^h.
\label{eq11a}
\end{equation}
By assumption $(A_9)$, for any $v\in X$, there exists $v^h\in X^h$ such that $v^h\to v$ in $X$.  Since 
$\{\|P u^h_\epsilon\|_{X^*}\}_{h,\epsilon>0}$ is uniformly bounded, we derive from \eqref{eq11a} that
\[ \limsup_{h,\epsilon\to 0}\langle P u^h_\epsilon,u^h_\epsilon-v\rangle \le 0\quad\forall\,v\in X. \]
Now that $P$ is pseudomonotone and $u^h_\epsilon\weak w$, implying
\[ \langle P w,w-v\rangle\le \liminf_{h,\epsilon\to 0}\langle P u^h_\epsilon,u^h_\epsilon-v\rangle 
 \quad\forall\,v\in X. \]
Combine the last two inequalities to get
\[ \langle P w,w-v\rangle\le 0\quad\forall\,v\in X. \]
From this relation, we conclude that
\[ \langle P w,v\rangle=0\quad\forall\,v\in X, \]
and hence, 
\[ w\in {\rm Ker}(P)=K. \]

Let us then prove that the weak limit $w$ is the solution of Problem (P).  Let $w^h\in K\cap X^h$ 
be such that
\[ \|w^h-w\|_X\to 0\quad {\rm as\ }h\to 0. \]
In \eqref{eq2}, we take $v^h=w^h$ to get
\begin{align*}
\langle Au^h_\epsilon, u^h_\epsilon-w^h \rangle 
&\le \frac{1}{\epsilon}\langle P u^h_\epsilon,w^h-u^h_\epsilon\rangle
+ \varphi(\gamma_\varphi u^h_\epsilon,\gamma_\varphi w^h)
- \varphi(\gamma_\varphi u^h_\epsilon,\gamma_\varphi u^h_\epsilon) \\
&\quad{} +j^0(\gamma_j u^h_\epsilon;\gamma_j w^h -\gamma_j u^h_\epsilon)-\langle f,w^h-u^h_\epsilon\rangle. 
\end{align*}
Since $Pw^h=0$ and $P$ is monotone,
\[ \frac{1}{\epsilon}\langle P u^h_\epsilon,w^h-u^h_\epsilon\rangle
 =-\frac{1}{\epsilon}\langle Pw^h-P u^h_\epsilon,w^h-u^h_\epsilon\rangle\le 0. \]
Hence,
\begin{align}
\langle Au^h_\epsilon, u^h_\epsilon-w^h \rangle 
&\le \varphi(\gamma_\varphi u^h_\epsilon,\gamma_\varphi w^h)
- \varphi(\gamma_\varphi u^h_\epsilon,\gamma_\varphi u^h_\epsilon) \nonumber \\
&\quad{} +j^0(\gamma_j u^h_\epsilon;\gamma_j w^h -\gamma_j u^h_\epsilon)-\langle f,w^h-u^h_\epsilon\rangle. 
\label{eq12}
\end{align}
Similar to \eqref{4.5a}, 
\begin{align*}
\varphi(\gamma_\varphi u^h_\epsilon,\gamma_\varphi w^h)
- \varphi(\gamma_\varphi u^h_\epsilon,\gamma_\varphi u^h_\epsilon)  
& \le \alpha_\varphi \|\gamma_\varphi (u^h_\epsilon-w)\|_{X_\varphi}
  \|\gamma_\varphi (u^h_\epsilon-w^h)\|_{X_\varphi}\\
&{}\quad +\varphi(\gamma_\varphi w,\gamma_\varphi w^h)-\varphi(\gamma_\varphi w,\gamma_\varphi u^h_\epsilon).
\end{align*}
For the terms on the right side of the above inequality, $\|\gamma_\varphi (u^h_\epsilon-w)\|_{X_\varphi}\to 0$, 
$\|\gamma_\varphi (u^h_\epsilon-w^h)\|_{X_\varphi}\to 0$ following the inequality
\[ \|\gamma_\varphi (u^h_\epsilon-w^h)\|_{X_\varphi}\le \|\gamma_\varphi (u^h_\epsilon-w)\|_{X_\varphi}
 +\|\gamma_\varphi (w-w^h)\|_{X_\varphi}, \]
and since $\varphi$ is continuous with respect to its second argument,
\[ \varphi(\gamma_\varphi w,\gamma_\varphi w^h)-\varphi(\gamma_\varphi w,\gamma_\varphi u^h_\epsilon)\to 0.\]
Thus,
\[\limsup_{h,\epsilon\to 0}\left[\varphi(\gamma_\varphi u^h_\epsilon,\gamma_\varphi w^h)
- \varphi(\gamma_\varphi u^h_\epsilon,\gamma_\varphi u^h_\epsilon) \right]\le 0. \]
Write
\begin{align*} 
j^0(\gamma_j u^h_\epsilon;\gamma_j w^h -\gamma_j u^h_\epsilon)
&= \left[j^0(\gamma_j u^h_\epsilon;\gamma_j w^h -\gamma_j u^h_\epsilon)
 +j^0(\gamma_j w^h;\gamma_j u^h_\epsilon -\gamma_j w^h)\right]\\
&{}\quad -j^0(\gamma_j w^h;\gamma_j u^h_\epsilon -\gamma_j w^h).
\end{align*}
Then,
\[ j^0(\gamma_j u^h_\epsilon;\gamma_j w^h -\gamma_j u^h_\epsilon)
\le \alpha_j \|\gamma_j (u^h_\epsilon -w^h)\|_{X_j}^2
+\left(c_0+c_1\|\gamma_j w^h\|_{X_j}\right) \|\gamma_j (u^h_\epsilon -w^h)\|_{X_j}, \]
and since $\|\gamma_j (u^h_\epsilon -w^h)\|_{X_j}\to 0$, we have
\[ \limsup_{h,\epsilon\to 0} j^0(\gamma_j u^h_\epsilon;\gamma_j w^h -\gamma_j u^h_\epsilon)\le 0. \]
Hence, from \eqref{eq12}, we derive 
\[ \limsup_{h,\epsilon\to 0} \langle Au^h_\epsilon, u^h_\epsilon-w^h \rangle \le 0. \]
This implies
\[ \limsup_{h,\epsilon\to 0} \langle Au^h_\epsilon, u^h_\epsilon-w \rangle \le 0. \]
By the pseudomonotonicity of $A$,
\begin{equation}
\langle Aw, w-v\rangle \le \liminf_{h,\epsilon\to 0} \langle Au^h_\epsilon, u^h_\epsilon-v \rangle
\quad\forall\,v\in X.
\label{eq13}
\end{equation}

Now fix an arbitrary element $v^{h^\prime}\in X^{h^\prime}\cap K$.  We take the upper limit as $h\to 0$ 
and $\epsilon\to 0$ along a subsequence of the spaces $X^h\supset X^{h^\prime}$ in \eqref{eq2} to obtain
\begin{align}
\limsup_{h,\epsilon\to 0} \langle Au^h_\epsilon, u^h_\epsilon-v^{h^\prime}\rangle 
&\le \varphi(\gamma_\varphi w,\gamma_\varphi v^{h^\prime})
- \varphi(\gamma_\varphi w,\gamma_\varphi w) \nonumber \\
&\quad{} +j^0(\gamma_j w;\gamma_j v^{h^\prime} -\gamma_j w)-\langle f,v^{h^\prime}-w\rangle. 
\label{eq14}
\end{align}
In the derivation of \eqref{eq14}, we used the inequality
\begin{align*}
\varphi(\gamma_\varphi u^h_\epsilon,\gamma_\varphi v^{h^\prime})
- \varphi(\gamma_\varphi u^h_\epsilon,\gamma_\varphi u^h_\epsilon)
&\le \alpha_\varphi\|\gamma_\varphi(u^h_\epsilon-w)\|_{X_\varphi}
\|\gamma_\varphi(v^{h^\prime}-u^h_\epsilon)\|_{X_\varphi} \\
&\quad {}+\varphi(\gamma_\varphi w,\gamma_\varphi v^{h^\prime})
- \varphi(\gamma_\varphi w,\gamma_\varphi u^h_\epsilon),
\end{align*}
the convergence $\|\gamma_\varphi(u^h_\epsilon-w)\|_{X_\varphi}\to 0$, the boundedness of
$\|\gamma_\varphi(v^{h^\prime}-u^h_\epsilon)\|_{X_\varphi}$, the continuity of $\varphi(\gamma_\varphi w,\cdot)$
on $X_\varphi$, and the upper continuity of $j^0(\cdot;\cdot)$ with respect to its two arguments.
Combine \eqref{eq13} and \eqref{eq14}, 
\[ \langle Aw, w-v^{h^\prime}\rangle \le \varphi(\gamma_\varphi w,\gamma_\varphi v^{h^\prime})
- \varphi(\gamma_\varphi w,\gamma_\varphi w)+j^0(\gamma_j w;\gamma_j v^{h^\prime} -\gamma_j w)
-\langle f,v^{h^\prime}-w\rangle.\]
Since $v^{h^\prime}\in X^{h^\prime}\cap K$ is arbitrary, we use the density of 
$\{X^{h^\prime}\cap K\}_{h^\prime}$ in $K$ to obtain
\begin{equation}
\langle Aw, w-v\rangle \le \varphi(\gamma_\varphi w,\gamma_\varphi v)
- \varphi(\gamma_\varphi w,\gamma_\varphi w)+j^0(\gamma_j w;\gamma_j v -\gamma_j w)
-\langle f,v-w\rangle\quad\forall\,v\in K.
\label{eq15}
\end{equation}
There, $w=u$ is the unique solution of Problem (P).

Since the limit $w=u$ is unique, we have the weak convergence of the entire family, i.e., 
\begin{equation}
u^h_\epsilon \weak u\quad {\rm in\ }X\ {\rm as\ } h,\epsilon\to 0. 
\label{eq16}
\end{equation}
Finally, let us prove the strong convergence.  We take $v^{h^\prime}=w^{h^\prime}$ in \eqref{eq14},
\begin{align*}
\limsup_{h,\epsilon\to 0} \langle Au^h_\epsilon, u^h_\epsilon-w^{h^\prime}\rangle 
&\le \varphi(\gamma_\varphi w,\gamma_\varphi w^{h^\prime})
- \varphi(\gamma_\varphi w,\gamma_\varphi w) \\
&\quad{} +j^0(\gamma_j w;\gamma_j w^{h^\prime} -\gamma_j w)-\langle f,w^{h^\prime}-w\rangle. 
\end{align*}
Then let $h^\prime\to 0$ and recall that $w=u$ to get
\begin{equation}
\limsup_{h,\epsilon\to 0} \langle Au^h_\epsilon, u^h_\epsilon-u \rangle\le 0.
\label{eq17}
\end{equation}
Apply \eqref{assum:A},
\[ m_A \|u^h_\epsilon -u\|_X^{2}\le \langle Au^h_\epsilon, u^h_\epsilon-u\rangle
-\langle Au, u^h_\epsilon-u\rangle.\]
By \eqref{eq17} and \eqref{eq16}, we conclude from the above inequality that
\[ m_A \|u^h_\epsilon -u\|_X^{2}\to 0\quad {\rm as\ }h,\epsilon\to 0, \]
i.e., we have the strong convergence $u^h_\epsilon \to u$ in $X$ as $h,\epsilon\to 0$. \hfill
\end{proof}

\section{Two relevant particular cases}\label{sec:particular}

In this section we consider two relevant particular cases of our results presented in  
the previous sections. They concern the case of a constrained hemivariational inequality 
(obtained when $\varphi\equiv 0$) as well as the case of a constrained variational inequality 
(obtained when $j\equiv 0$).

When $\varphi\equiv 0$ in Problem~(P), we have a pure hemivariational inequality from \eqref{hv}:  
\smallskip

{\sc Problem} $({\rm P})'$. 
{\it Find an element  $u \in K$ such that}
\begin{equation}\label{hv'}
\langle Au,v-u\rangle+ j^0(\gamma_j u;\gamma_j v -\gamma_j u)\ge\langle f,v-u\rangle\quad\forall\,v \in K.
\end{equation}

We need to modify $(A_7)$ and $(A_{10})$ for this particular case.

$(A_7)'$
\begin{equation}
\alpha_j c_j^2 < m_A. \label{small'}
\end{equation}

$(A_{10})'$ $\gamma_j\colon X\to X_j$ is compact.

Under the assumptions $(A_1)$, $(A_3)$, $(A_4)$, $(A_6)$, $(A_7)'$ and $(A_8)$, 
Problem $({\rm P})'$ has a unique solution. 

With the finite dimensional space $X^h$ and subset $K^h\subset X^h$ as at the beginning of 
Section \ref{s3}, we can introduce a penalty based numerical method for Problem $({\rm P})'$.

\smallskip
\noindent {\sc Problem} $({\rm P}^h_\epsilon)'$. 
{\it Find an element  $u^h_\epsilon\in X^h$ such that}
\begin{equation}
\langle Au^h_\epsilon,v^h-u^h_\epsilon\rangle+\frac{1}{\epsilon}\langle P u^h_\epsilon,v^h-u^h_\epsilon\rangle
 +j^0(\gamma_j u^h_\epsilon;\gamma_j v^h -\gamma_j u^h_\epsilon)\ge\langle f,v^h-u^h_\epsilon\rangle
\quad\forall\,v^h\in X^h. \label{eq2'}
\end{equation}

This problem has a unique solution under the assumptions $(A_1)$, $(A_3)$, $(A_4)$, $(A_6)$, 
$(A_7)'$ and $(A_8)$.

By Theorem \ref{thm:main}, we have the following convergence result for the penalty based numerical method.

\begin{corollary}\label{thm:main'}
Assume $(A_1)$, $(A_3)$, $(A_4)$, $(A_6)$, $(A_7)'$, $(A_8)$, $(A_9)$ and $(A_{10})'$.  Then,
\begin{equation}
 u^h_\epsilon\to u\quad{\rm in\ }X\ {\rm as\ }h,\epsilon\to 0. \label{4.9a'}
\end{equation}
\end{corollary}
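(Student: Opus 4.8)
The plan is to obtain Corollary~\ref{thm:main'} as a direct specialization of Theorem~\ref{thm:main} rather than by repeating the argument. The key observation is that Problem~$({\rm P})'$ is exactly Problem~(P) with the choice $\varphi\equiv 0$ (equivalently, $X_\varphi$ an arbitrary Banach space, $\gamma_\varphi$ any continuous linear map, and $\varphi(z,w)\equiv 0$), and likewise Problem~$({\rm P}^h_\epsilon)'$ coincides with Problem~$({\rm P}^h_\epsilon)$ under the same choice. So the first step is simply to verify that this choice satisfies all the hypotheses $(A_1)$--$(A_{10})$ of Theorem~\ref{thm:main}, given the hypotheses $(A_1)$, $(A_3)$, $(A_4)$, $(A_6)$, $(A_7)'$, $(A_8)$, $(A_9)$, $(A_{10})'$ assumed in the corollary.

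For the verification: $(A_1)$, $(A_3)$, $(A_4)$, $(A_6)$, $(A_8)$, $(A_9)$ are assumed verbatim. For $(A_2)$ and the $\gamma_\varphi$-part of $(A_{10})$, one takes, say, $X_\varphi=X$, $\gamma_\varphi=\mathrm{id}_X$ (or the zero operator if one prefers compactness trivially) — but note that if $X$ is infinite dimensional the identity is not compact, so the cleaner choice is $X_\varphi=\{0\}$ with $\gamma_\varphi=0$, which is trivially linear, continuous with $c_\varphi$ any positive constant, and compact; this disposes of $(A_2)$ and the $\gamma_\varphi$ half of $(A_{10})$. For $(A_5)$, the zero function $\varphi(z,\cdot)\equiv 0$ is convex and l.s.c., and inequality~\eqref{assum:phi} holds with $\alpha_\varphi=0$. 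For $(A_7)$, condition~\eqref{small} becomes $0\cdot c_\varphi^2+\alpha_j c_j^2<m_A$, which is precisely $(A_7)'$, i.e.\ \eqref{small'}. The $\gamma_j$ half of $(A_{10})$ is exactly $(A_{10})'$. Hence all of $(A_1)$--$(A_{10})$ hold for the specialized data.

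Once the hypotheses are checked, Theorem~\ref{thm:main} applies and yields $u^h_\epsilon\to u$ in $X$ as $h,\epsilon\to 0$, where $u\in K$ is the unique solution of Problem~(P) with $\varphi\equiv 0$, which is exactly the unique solution of Problem~$({\rm P})'$ (existence and uniqueness for the latter being noted in the text just before the statement of Problem~$({\rm P}^h_\epsilon)'$, and also following from Theorem~\ref{t1} under the same specialization). Since $\varphi\equiv 0$, the variational-hemivariational inequality \eqref{hv} collapses to \eqref{hv'} and the penalized discrete problem \eqref{eq2} collapses to \eqref{eq2'}, so the solutions coincide and the conclusion~\eqref{4.9a} becomes \eqref{4.9a'}. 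This completes the proof.

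There is essentially no obstacle here: the only point requiring a moment's care is the choice of $X_\varphi$ and $\gamma_\varphi$ so that $(A_2)$ and the compactness part of $(A_{10})$ hold without extra assumptions — taking $X_\varphi=\{0\}$ (or any finite-dimensional space) with $\gamma_\varphi=0$ handles this cleanly, and with that the whole argument is bookkeeping. One could alternatively re-run the proof of Theorem~\ref{thm:main} with $\varphi\equiv 0$ throughout — all the $\varphi$-terms and the invocations of Lemmas~\ref{lem:continuity} and \ref{lem:lower} for $\varphi(\gamma_\varphi u_0,\cdot)$ simply drop out, the boundedness Proposition~\ref{prop:bd} goes through with $\alpha_\varphi=0$, and the limit passage in \eqref{eq12}--\eqref{eq15} simplifies accordingly — but the specialization route is shorter and is the one the paper's structure invites.
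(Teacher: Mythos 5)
Your proposal is correct and is essentially the paper's own argument: the paper states Corollary~\ref{thm:main'} as a direct consequence of Theorem~\ref{thm:main} in the special case $\varphi\equiv 0$, which is exactly your specialization. Your extra care in choosing $X_\varphi$ and $\gamma_\varphi$ (e.g.\ $X_\varphi=\{0\}$, $\gamma_\varphi=0$) so that $(A_2)$, $(A_5)$ with $\alpha_\varphi=0$, and the compactness half of $(A_{10})$ hold trivially is a sound and slightly more explicit version of the bookkeeping the paper leaves implicit.
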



When $j\equiv 0$ in Problem~(P), we have a variational inequality from \eqref{hv}:  
\smallskip

{\sc Problem} $({\rm P})''$. 
{\it Find an element  $u \in K$ such that}
\begin{equation}\label{hv''}
\langle Au,v-u\rangle+\varphi(\gamma_\varphi u,\gamma_\varphi v)-\varphi(\gamma_\varphi u,\gamma_\varphi u)
\ge\langle f,v-u\rangle\quad\forall\,v \in K.
\end{equation}

We modify $(A_7)$ and $(A_{10})$ for this particular case as follows.

$(A_7)''$
\begin{equation}
\alpha_\varphi c_\varphi^2 < m_A. \label{small''}
\end{equation}

$(A_{10})''$ $\gamma_\varphi\colon X\to X_\varphi$ is compact.

Under the assumptions $(A_1)$, $(A_2)$, $(A_4)$, $(A_5)$, $(A_7)''$ and $(A_8)$, 
Problem $({\rm P})''$ has a unique solution. 

With the finite dimensional space $X^h$ and subset $K^h\subset X^h$ as at the beginning of 
Section \ref{s3}, we can introduce a penalty based numerical method for Problem $({\rm P})''$.

\smallskip
\noindent {\sc Problem} $({\rm P}^h_\epsilon)''$. 
{\it Find an element  $u^h_\epsilon\in X^h$ such that}
\begin{equation}
\langle Au^h_\epsilon,v^h-u^h_\epsilon\rangle+\frac{1}{\epsilon}\langle P u^h_\epsilon,v^h-u^h_\epsilon\rangle
 +\varphi(\gamma_\varphi u^h_\epsilon,\gamma_\varphi v^h)
 -\varphi(\gamma_\varphi u^h_\epsilon,\gamma_\varphi u^h_\epsilon)\ge\langle f,v^h-u^h_\epsilon\rangle
\quad\forall\,v^h\in X^h. \label{eq2''}
\end{equation}

This problem has a unique solution under the assumptions $(A_1)$, $(A_2)$, $(A_4)$, $(A_5)$, $(A_7)''$ 
and $(A_8)$.

By Theorem \ref{thm:main}, we have the following convergence result for the penalty based numerical method.

\begin{corollary}\label{thm:main''}
Assume $(A_1)$, $(A_2)$, $(A_4)$, $(A_5)$, $(A_7)''$, $(A_8)$, $(A_9)$ and $(A_{10})''$.  Then,
\begin{equation}
 u^h_\epsilon\to u\quad{\rm in\ }X\ {\rm as\ }h,\epsilon\to 0. \label{4.9a''}
\end{equation}
\end{corollary}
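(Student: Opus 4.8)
The plan is to obtain the statement as a direct specialization of Theorem~\ref{thm:main}, by checking that the hypotheses $(A_1)$, $(A_2)$, $(A_4)$, $(A_5)$, $(A_7)''$, $(A_8)$, $(A_9)$, $(A_{10})''$ imply all the hypotheses $(A_1)$--$(A_{10})$ once one sets $j\equiv 0$. First I would record that Problem~$({\rm P})''$ is exactly Problem~(P) with the choice $j\equiv 0$, and likewise Problem~$({\rm P}^h_\epsilon)''$ is Problem~$({\rm P}^h_\epsilon)$ with $j\equiv 0$, because the Clarke directional derivative of the zero functional vanishes identically: $0^0(z;w)=0$ for all $z,w$. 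Hence the terms $j^0(\gamma_j u;\gamma_j v-\gamma_j u)$ and $j^0(\gamma_j u^h_\epsilon;\gamma_j v^h-\gamma_j u^h_\epsilon)$ simply drop out of \eqref{hv} and \eqref{eq2}, recovering \eqref{hv''} and \eqref{eq2''}.

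Next I would verify the remaining assumptions of Theorem~\ref{thm:main} in this setting. For $(A_3)$ and $(A_{10})$ one may take $X_j$ to be any convenient Banach space (e.g.\ $X_j=\{0\}$, or $X_j=X_\varphi$) and $\gamma_j$ the zero operator; the zero operator is linear, continuous with continuity constant $c_j=0$, and compact, so $(A_3)$ and $(A_{10})$ hold. For $(A_6)$, the zero functional is (locally) Lipschitz, $\partial(0)(z)=\{0\}$, so \eqref{assum:j1} holds with $c_0=c_1=0$ and \eqref{assum:j2} holds with $\alpha_j=0$ since $0^0(z_1;z_2-z_1)+0^0(z_2;z_1-z_2)=0$. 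With $\alpha_j=0$ and $c_j=0$, condition $(A_7)''$, namely $\alpha_\varphi c_\varphi^2<m_A$, is precisely $(A_7)$: $\alpha_\varphi c_\varphi^2+\alpha_j c_j^2<m_A$. The assumption $(A_{10})''$ that $\gamma_\varphi$ is compact, together with the just-noted compactness of the zero operator $\gamma_j$, gives $(A_{10})$. The assumptions $(A_1)$, $(A_2)$, $(A_4)$, $(A_5)$, $(A_8)$, $(A_9)$ are assumed verbatim.

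Therefore all hypotheses of Theorem~\ref{thm:main} are satisfied, and its conclusion \eqref{4.9a} reads exactly $u^h_\epsilon\to u$ in $X$ as $h,\epsilon\to 0$, which is \eqref{4.9a''}. I do not anticipate any genuine obstacle here; the only point requiring a word of care is the harmless bookkeeping choice of the auxiliary space $X_j$ and the observation that the $j$-terms vanish identically rather than merely being bounded, so that no part of the argument in the proof of Theorem~\ref{thm:main} is weakened. (One could equally well give a self-contained proof by repeating the argument of Theorem~\ref{thm:main} with every occurrence of a $j$-term deleted, but the reduction above is shorter and makes the logical dependence transparent.)
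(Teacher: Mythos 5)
Your proposal is correct and matches the paper's approach exactly: the paper also obtains Corollary~\ref{thm:main''} as an immediate specialization of Theorem~\ref{thm:main} with $j\equiv 0$, and your explicit bookkeeping (taking $\gamma_j$ to be the zero operator so that $(A_3)$, $(A_6)$, $(A_{10})$ hold trivially with $\alpha_j c_j^2=0$, making $(A_7)''$ coincide with $(A_7)$) just spells out what the paper leaves implicit. The only cosmetic point is that $(A_3)$ as stated asks for a constant $c_j>0$, so one should say that \eqref{Xj} holds for any positive $c_j$ rather than setting $c_j=0$; this changes nothing since $\alpha_j=0$.
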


\section{Applications in sample contact problems}\label{s4}

In this section, we illustrate applications of the convergence results established in the 
previous sections in the numerical solution of two static contact problems with constraints.  
The physical setting of a static contact problem, described with details in \cite{HS,MOS2013,SM2018} 
is as follows: the reference configuration of an elastic body is an open, bounded, 
connected set $\Omega\subset\real^d$ ($d=2$ or $3$ in applications) with a Lipschitz boundary 
$\Gamma=\partial\Omega$ partitioned into disjoint, measurable parts $\Gamma_1$, $\Gamma_2$ 
and $\Gamma_3$.  The body is in equilibrium under the action of a volume force of density 
$\fb_0$ in $\Omega$ and a surface traction of density $\fb_2$ on $\Gamma_2$; it is fixed on $\Gamma_1$
and is in contact on $\Gamma_3$ with one or two obstacles.   We assume ${\rm meas}\,(\Gamma_1)>0$.

For the description of the contact problems, we use the symbol $\mathbb{S}^{d}$ to denote the space of 
second order symmetric tensors on $\mathbb{R}^d$, and ``$\cdot$'' and ``$\|\cdot\|$'' will represent 
the canonical inner product and norm on the spaces $\mathbb{R}^d$ and $\mathbb{S}^d$.  We use
$\bu\colon \Omega\rightarrow\mathbb{R}^d$ for the displacement field and 
$\bsigma\colon \Omega\rightarrow\mathbb{S}^d$ for the stress field. Moreover, 
$\bvarepsilon(\bu):=\left(\nabla \bu+(\nabla \bu)^T\right)/2$ will represent the linearized strain 
tensor. Let $\bnu$ be the unit outward normal vector, which is defined a.e.\ on $\Gamma$.
For a vector field $\bv$, $v_\nu:=\bv\cdot\bnu$ and $\bv_\tau:=\bv-v_\nu\bnu$ are the normal 
and tangential components of $\bv$ on $\Gamma$. For the stress field $\bsigma$, 
$\sigma_\nu:=(\bsigma\bnu)\cdot\bnu$ and $\bsigma_\tau:=\bsigma\bnu-\sigma_\nu\bnu$ are
its normal and tangential components on the boundary.  

The two contact problems we consider in this section have the following equations and
boundary conditions in common:
\begin{align}
\label{H1} & \bsigma={\mathcal F}\bvarepsilon({\bu})\quad {\rm in}\ \Omega,\\
\label{H2} & {\rm Div}\,\bsigma+\fb_0=\bzero\quad{\rm in}\ \Omega,\\
\label{H3} & \bu=\bzero\quad{\rm on}\ \Gamma_1,\\
\label{H4} & \bsigma\bnu=\fb_2\quad{\rm on}\ \Gamma_2.
\end{align}
Equation \eqref{H1} is the elastic constitutive law where $\cF$ is the elasticity operator, 
\eqref{H2} represents the equilibrium equation, \eqref{H3} is the displacement boundary condition,  
and \eqref{H4} describes the traction boundary condition.  

We use the space
\[ V=\left\{\bv=(v_i)\in H^1(\Omega; \real^d) \mid \bv=\bzero\ {\rm a.e.\ on}\ \Gamma_1\right\}\]
or its subset for the displacement. Since ${\rm meas}\,(\Gamma_1)>0$, by Korn's inequality, 
$V$ is a Hilbert space with the inner product
\[ (\bu,\bv)_V:=\int_\Omega\bvarepsilon(\bu)\cdot\bvarepsilon(\bv)\,dx,\quad\bu,\bv\in V. \]
We denote the trace of a function $\bv\in H^1(\Omega; \real^d)$ on $\Gamma$ by the same symbol $\bv$.
We use the space $Q=L^2(\Omega; \mathbb{S}^d)$ for the stress and strain fields and we recall that $Q$ 
is a Hilbert space with the canonical inner product 
\[ (\bsigma,\btau)_Q:=\int_\Omega\sigma_{ij}(\bx)\,\tau_{ij}(\bx)\,dx,\quad \bsigma,\btau\in Q.\]

We assume that the elasticity operator $\cF\colon\Omega\times\mathbb{S}^d\to\mathbb{S}^d$ 
has the following properties.
\begin{equation}
\left\{\begin{array}{ll} 
{\rm (a)\  there\ exists}\ L_{\cal F}>0\ {\rm such\ that\ for\ all\ }\bvarepsilon_1,\bvarepsilon_2
    \in \mathbb{S}^d,\ {\rm a.e.}\ \bx\in \Omega,\\
   {}\qquad \|{\cal F}(\bx,\bvarepsilon_1)-{\cal F}(\bx,\bvarepsilon_2)\|
      \le L_{\cal F} \|\bvarepsilon_1-\bvarepsilon_2\|; \\
  {\rm (b)\  there\ exists}\ m_{\cal F}>0\ {\rm such\ that\ for\ all\ }\bvarepsilon_1,\bvarepsilon_2
    \in \mathbb{S}^d,\ {\rm a.e.}\ \bx\in \Omega,\\
    {}\qquad ({\cal F}(\bx,\bvarepsilon_1)-{\cal F}(\bx,\bvarepsilon_2))
       \cdot(\bvarepsilon_1-\bvarepsilon_2)\ge m_{\cal F}\,
      \|\bvarepsilon_1-\bvarepsilon_2\|^2;\\
{\rm (c) } \ {\cal F}(\cdot,\bvarepsilon)\ {\rm is\ measurable\ on\ }\Omega
\ {\rm for\ all \ }\bvarepsilon\in \mathbb{S}^d;\\
{\rm (d)}\ {\cal F}(\bx,\bzero)=\bzero\  {\rm for\ a.e.}\ \bx\in \Omega.
\end{array}\right.
\label{Fm}
\end{equation}
On the densities of the body force and the surface traction, we assume
\begin{equation}
\fb_0 \in L^2(\Omega;\real^d), \quad \fb_2\in L^2(\Gamma_2;\real^d).
\label{MOS30e}
\end{equation}
This regularity allows us to define the element
$\fb \in  V^*$ by equality
\begin{equation}
\langle\fb, \bv\rangle_{V^*\times V}=(\fb_0,\bv)_{L^2(\Omega;\real^d)}+(\fb_2,\bv)_{L^2(\Gamma_2;\real^d)}, 
\quad \bv\in V.
\label{MOS30f}
\end{equation}

We now complete the model \eqref{H1}--\eqref{H4} with specific contact conditions and friction laws.

\smallskip\noindent{\bf A unilateral frictional contact problem.}
In the first contact problem, we consider the case where the contact boundary $\Gamma_3$ consists of 
two disjoint measurable pieces, $\Gamma_{3,1}$ and $\Gamma_{3,2}$. On  $\Gamma_{3,1}$  the body is 
in contact with a perfectly rigid obstacle and we assume that the friction forces are negligible. 
Therefore, we model the contact with the frictionless Signorini unilateral contact condition, i.e., 
\begin{equation}
u_\nu\le0,\quad \sigma_\nu\le 0,\quad \sigma_\nu u_\nu=0,\quad \bsigma_\tau=\bzero\quad{\rm on}\ \ \Gamma_{3,1}.
\label{prob1.1}
\end{equation}
On $\Gamma_{3,2}$ the body is in persistent contact with a piston or a device, in such a way 
that the magnitude of the normal stress is limited by a given bound, denoted $F$. Moreover, 
when normal displacements occur, the reaction of the device is opposite to the displacement.
In addition, the contact is frictional and is modeled with a nonmonotone subdifferential boundary condition. 
These assumptions lead to the following boundary condition:
\begin{equation}
|\sigma_\nu|\le F,\quad -\sigma_\nu=F\,\frac{u_\nu}{\|u_\nu\|}\quad {\rm if}\ u_\nu\not=0,
\quad -\bsigma_\tau\in\partial j_\tau(\bu_\tau)\quad{\rm on}\ \Gamma_{3,2}.
\label{prob1.2}
\end{equation}
Here $\partial j_\tau$ is the Clarke subdifferential of a locally Lipschitz continuous
potential functional $j_\tau$.  

We assume that the bound $F$ and the potential function
$j_\tau\colon \Gamma_{3,2} \times \real^d \to\real$ 
have the following properties.
\begin{equation}\label{Fx}
F\in L^2(\Gamma_{3,2}),\qquad F(\bx)\ge 0\quad{\rm a.e.}\ \bx\in\Gamma_{3,2}.
\end{equation}
\begin{equation}
\label{jt}
\left\{
\begin{array}{l}
j_\tau \colon \Gamma_{3,2} \times\mathbb{R}^d \to \mathbb{R} \ \mbox{\rm is such that}\\[1mm]
\ \ {\rm (a)}\ j_\tau(\cdot, \bxi) \ \mbox{\rm is measurable on} \
\Gamma_{3,2} \ \mbox{\rm for all}\ \bxi \in \mathbb{R}^{d},\\[1mm]
\ \ {\rm (b)}\ j_\tau(\bx, \cdot) \ \mbox{\rm is locally Lipschitz on}
\ \real^d \ \mbox{\rm for a.e.}\ \bx \in \Gamma_{3,2},\\[1mm]
\ \ {\rm (c)}\ \| \partial j_\tau(\bx, \bxi) \|\le c_{0\tau} + c_{1\tau} \|\bxi\|\ \mbox{\rm for all}\
\bxi \in \mathbb{R}^d,\ \mbox{\rm a.e.}\ \bx \in \Gamma_{3,2} \\ [1mm]
\qquad \mbox{\rm with}\ c_{0\tau}, c_{1\tau} \ge 0,\\ [1mm]
\ \ {\rm (d)} \ j_\tau^0(\bx, \bxi_1; \bxi_2 - \bxi_1) + j_\nu^0(\bx, \bxi_2; \bxi_1 - \bxi_2) \le
\alpha_{j_\tau} \, \| \bxi_1 - \bxi_2 \|^2 \\ [1mm] 
\qquad \mbox{for a.e.} \ \bx \in \Gamma_{3,2}, \ \mbox{all} \ \bxi_1,\,
\bxi_2 \in \real^d \ \mbox{with} \ \alpha_{j_\tau} \ge 0.
\end{array}
\right. 
\end{equation} 

Then, the set of admissible displacement functions for the contact problem (\ref{H1})--(\ref{H4}), 
(\ref{prob1.1}), (\ref{prob1.2}) is 
\begin{equation}
U_0:=\left\{\bv\in V\mid v_\nu\le 0\ {\rm on\ }\Gamma_{3,1}\right\},
\label{prob1.3}
\end{equation}
and the weak formulation of the problem is the following.

{\sc Problem (P${}_1$)}.  \emph{Find a displacement field $\bu\in U_0$ such that}
\begin{align}
& \int_\Omega {\cal F}(\bvarepsilon({\bu}))\cdot\bvarepsilon(\bv-\bu)\,dx
+  \int_{\Gamma_{3,2}}F\left(|v_\nu|-|u_\nu|\right) ds \nonumber \\
&\qquad{} +\int_{\Gamma_{3,2}} j_\tau^0 (\bu_\tau; \bv_\tau-\bu_\tau)\, ds
\ge \langle\fb,\bv-\bu\rangle_{V^*\times V} \quad\forall\, \bv \in U_0. \label{prob1.4} 
\end{align}

Let $X=V$, $K=U_0$, $X_\varphi=L^2(\Gamma_{3,2})$ with $\gamma_\varphi$ the trace operator from
$V$ to $X_\varphi$, $X_j=L^2(\Gamma_{3,2})^d$ with $\gamma_j \bv=\bv_\tau$ for $\bv\in V$.  Define
\[ j(\gamma_j \bv)=\int_{\Gamma_{3,2}} j_\tau(\bv_\tau)\, ds,\quad \bv\in V.\]
Then, $\alpha_\varphi=0$ and $\alpha_j=\alpha_{j_\tau}$.  The smallness condition \eqref{small} 
takes the form
\begin{equation}
\alpha_{j_\tau}c_j^2< m_{\cal F},
\label{MOS30i1}
\end{equation}
where $c_j$ represents the norm of the trace operator $\gamma_j$.  Applying Theorem \ref{t1}, we know that 
under the stated assumptions and \eqref{MOS30i1}, there is a unique element $\bu\in U_0$ satisfying 
\begin{align}
& \int_\Omega {\cal F}(\bvarepsilon({\bu}))\cdot\bvarepsilon(\bv-\bu)\,dx
+  \int_{\Gamma_{3,2}}F\left(|v_\nu|-|u_\nu|\right) ds \nonumber \\
&\qquad{} +j^0 (\gamma_j\bu; \gamma_j\bv-\gamma_j\bu)
\ge \langle\fb,\bv-\bu\rangle_{V^*\times V} \quad\forall\, \bv \in U_0. \label{prob1.4aa} 
\end{align}
Since (\cite[Theorem 3.47]{MOS2013})
\[ j^0(\gamma_j\bu; \gamma_j\bv-\gamma_j\bu)\le\int_{\Gamma_{3,2}} j_\tau^0(\bu_\tau;\bv_\tau-\bu_\tau)\,ds,\]
$\bu\in U_0$ is also a solution of Problem (P${}_1$).  The uniqueness of a solution of Problem (P${}_1$)
can be verified directly by a standard approach.  Thus, under the stated assumptions and \eqref{MOS30i1}, 
Problem (P${}_1$) has a unique solution $\bu\in U_0$ .

Introduce an operator $P$ by
\begin{equation}
\langle P\bu,\bv\rangle=\int_{\Gamma_3}(u_\nu)_{+}v_\nu ds, \quad \bu,\bv\in V.
\label{ex:P1}
\end{equation}
Here and below, $r_+$ denotes the positive part of $r$.  It is easy to verify that 
$P\colon V\to V^*$ is a penalty operator for the set $U_0$.  Therefore, the penalized formulation of 
Problem (P${}_1$) is to find $\bu_\epsilon\in V$ such that
\begin{align}
& \int_\Omega {\cal F}(\bvarepsilon({\bu_\epsilon}))\cdot\bvarepsilon(\bv-\bu_\epsilon)\,dx
+\frac{1}{\epsilon}\int_{\Gamma_{3,2}}(u_{\epsilon,\nu})_{+}(v_\nu-u_{\epsilon,\nu})\, ds
+  \int_{\Gamma_{3,2}}F\left(|v_\nu|-|u_{\epsilon,\nu}|\right) ds\nonumber\\
&\qquad{} +\int_{\Gamma_{3,2}} j_\tau^0 (\bu_{\epsilon,\tau}; \bv_\tau-\bu_{\epsilon,\tau})\, ds
\ge \langle\fb,\bv-\bu_\epsilon\rangle_{V^*\times V} \quad\forall\, \bv \in V. \label{ex1c1}
\end{align}

Let us use the finite element method for the numerical solution of Problem (P${}_1$).  For brevity, 
assume $\Omega$ is a polygonal/polyhedral domain and express the each part of the boundary, 
where a different type of boundary condition is specified, as unions of closed flat components with 
disjoint interior.  Let $\{{\cal T}^h\}$ be a regular family of partitions of into triangles/tetrahedrons 
such that if the intersection of one side/face of an element with one closed flat
component has a relative positive measure, then the side/face lies entirely in that closed flat component.
Construct the linear element space corresponding to ${\cal T}^h$:
\[ V^h=\left\{\bv^h\in C(\overline{\Omega})^d \mid \bv^h|_T\in \mathbb{P}_1(T)^d,\ T\in {\cal T}^h,\
   \bv^h=\bzero\ {\rm on\ }\Gamma_1\right\}. \]
Then the penalty based numerical method for Problem (P${}_1$) is as follows.

\noindent 
{\sc Problem (P${}_{1,\epsilon}^h$)}.  \emph{Find a displacement field $\bu^h_\epsilon\in V^h$ such that}
\begin{align}
& \int_\Omega {\cal F}(\bvarepsilon({\bu^h_\epsilon}))\cdot\bvarepsilon(\bv^h-\bu^h_\epsilon)\,dx
+\frac{1}{\epsilon}\int_{\Gamma_{3,1}}(u^h_{\epsilon,\nu})_{+}(v^h_\nu-u^h_{\epsilon,\nu})\,ds
+\int_{\Gamma_{3,2}}F\left(|v_\nu^h|-|u_{\epsilon,\nu}^h|\right)\,ds\nonumber\\
&\qquad{} +\int_{\Gamma_{3,2}} j_\tau^0 (\bu_{\epsilon,\tau}^h; \bv_\tau^h-\bu_{\epsilon,\tau}^h)\,ds
\ge \langle\fb,\bv^h-\bu^h_\epsilon\rangle_{V^*\times V} \quad\forall\, \bv ^h\in V^h. \label{ex1a1}
\end{align}

The argument used in proving Theorem \ref{thm:main} is valid with $j^0(\cdot;\cdot)$ replaced 
by $\int_{\Gamma_{3,2}} j_\tau^0(\cdot;\cdot)\,ds$.  Thus, for the numerical solution 
$\bu^h_\epsilon$ of Problem (P${}_{1,\epsilon}^h$), we ascertain  the convergence:
\begin{equation}
 \bu^h_\epsilon\to \bu\quad{\rm in\ }V,\ {\rm as\ }h,\epsilon\to 0. \label{ex1b1}
\end{equation}
Indeed, it is routine to verify the conditions $(A_1)$--$(A_8)$ and $(A_{10})$ of Theorem \ref{thm:main} 
for Problem (P${}_1$) and Problem (P${}_{1,\epsilon}^h$).  Therefore, we restrict ourselves to 
examine the condition $(A_9)$.  For this, we note from \cite{HL77} and the explanations in 
\cite[Section 7.1]{HS} that $C^\infty(\overline{\Omega})^3\cap U_0$ is dense in $U_0$.  Thus, for 
any $\bv\in U_0$, we can first find a function $\tilde{\bv}\in C^\infty(\overline{\Omega})^3\cap U_0$ 
that is sufficiently close to $\bv$ in the norm of $V$; then by the finite element interpolation 
theory, we can approximate $\tilde{\bv}$ sufficiently closely by a finite element function 
$\bv^h\in V^h\cap U_0$ when the mesh-size $h$ is small enough.  Therefore, any function in $U_0$
can be approximated by a sequence of finite element functions that belong to $U_0$.

We note that in the special case where $\Gamma_{3,2}=\emptyset$ or $j_\tau$ vanishes,
Problem (P${}_1$) is simplified to a variational inequality.  The penalty based numerical method
Problem (P${}_{1,\epsilon}^h$) is similarly simplified and we have the convergence result \eqref{ex1b1} 
with simplified conditions, e.g., the condition \eqref{MOS30i1} is no longer needed.
Actually, in this case we are in a position to apply Corollary \ref{thm:main''}.

\smallskip\noindent{\bf A unilateral normal compliance frictional contact problem.} 
In the second contact problem, the boundary conditions on the contact surface are (cf.\ \cite{MOS30})
\begin{align}
& u_\nu\le g,\ \sigma_\nu+\xi_\nu\le 0,\ (u_\nu-g)\,(\sigma_\nu+\xi_\nu)=0,\ \xi_\nu\in\partial j_\nu(u_\nu)
\quad{\rm on\ }\Gamma_3, \label{MOS30a}\\[1mm]
& \|\bsigma_\tau\|\le F_b(u_\nu),\ -\bsigma_\tau=F_b(u_\nu)\,\frac{\bu_\tau}{\|\bu_\tau\|}
\ {\rm if\ }\bu_\tau\not=\bzero
\quad{\rm on\ }\Gamma_3. \label{MOS30b}
\end{align}

In condition \eqref{MOS30a}, inequality $u_\nu\le g$ restricts the allowed penetration and $j_\nu$ 
is a given potential function.  The contact condition \eqref{MOS30a} represents a combination of 
the Signorini contact condition for contact with a rigid foundation and the normal compliance condition 
for contact with a deformable foundation.  It models the contact with an obstacle made of a rigid body
covered with a soft layer of deformable material of thickness $g$.  Details and various mechanical 
interpretations can be found, e.g., in \cite{SM2018}.  The tangential contact condition \eqref{MOS30b}
represents a version of Coulomb's law of dry friction. Here  $F_b$ denotes the friction bound, 
assumed to  depend on the normal displacement $u_\nu$.  We now consider the following hypothesis
on the thickness $g:\Gamma_3\to\real$, the potential function $j_\nu\colon\Gamma_3\times\real\to\real$
and the friction bound $F_b\colon \Gamma_3\times\mathbb R\to \mathbb R_+$.
\begin{align}
&\label{gm}
\qquad\quad\ g\in L^2(\Gamma_3),\quad g(\bx)\ge 0\quad {\rm a.e.\ on}\ \Gamma_3.\\[2mm] 
&\left\{\begin{array}{ll} 
{\rm (a)} \ j_\nu (\cdot, r) \ \mbox{is measurable on} \ \Gamma_3 \ \mbox{for all} \ r \in \real
 \ \mbox{and there} \\
 \qquad  \mbox{exists} \ {\bar{e}} \in L^2(\Gamma_3)
\ \mbox{such that} \ j_\nu(\cdot, {\bar{e}} (\cdot)) \in L^1(\Gamma_3); \\ [1mm]
 {\rm (b)} \ j_\nu(\bx, \cdot)\ \mbox{is locally Lipschitz on}\ \real\ \mbox{for a.e.}\ \bx\in\Gamma_3;\\[1mm]
{\rm (c)} \ | \partial j_\nu(\bx, r) | \le {\bar{c}}_0 + {\bar{c}}_1|r|\ \mbox{for a.e.}\ \bx\in\Gamma_3\ 
 \forall\, r \in \real \ \mbox{with}\ {\bar{c}}_0, \, {\bar{c}}_1 \ge 0; \\[1mm]
{\rm (d)} \ j_\nu^0(\bx, r_1; r_2 - r_1) + j_\nu^0(\bx, r_2; r_1 - r_2) \le\alpha_{j_\nu}| r_1 - r_2 |^2\\
\qquad \mbox{for a.e.}\ \bx \in \Gamma_3, \ \mbox{all}\ r_1,r_2 \in \real\ \mbox{with}\ \alpha_{j_\nu}\ge 0.
\end{array}
\right.
\label{MOS30c}\\[2mm]  
&\left\{\begin{array}{ll}
{\rm (a)\ } \mbox{ There exists }L_{F_b}>0\mbox{ such that }\\
\qquad |F_b(\bx,r_1)-F_b(\bx,r_2)|\leq L_{F_b}|r_1-r_2|\quad\forall\,r_1,r_2\in{\mathbb R},
\ {\rm a.e.\ }\bx\in\Gamma_3;\\
{\rm (b)\ } F_b(\cdot,r){\rm\ is\ measurable\ on\ }\Gamma_3,\ {\rm for\ all\ }r\in \mathbb{R};\\
{\rm (c)}\ F_b(\bx,r)=0\ {\rm for}\ r\le 0,\ F_b(\bx,r)\ge 0\ {\rm for}\ r\ge 0,\ {\rm a.e.\ }\bx\in\Gamma_3.
\end{array}\right.
\label{MOS30d}
\end{align}
Then, the set of admissible displacement functions  for the contact problem (\ref{H1})--(\ref{H4}), 
(\ref{MOS30a}), (\ref{MOS30b}) is 
\[ U:=\left\{\bv\in V\mid v_\nu\le g\ {\rm on\ }\Gamma_3\right\}. \]
The weak formulation of this  problem is the following.
\smallskip

{\sc Problem (P${}_2$)}.  \emph{Find a displacement field $\bu\in U$ such that}
\begin{eqnarray}
&& \int_\Omega {\cal F}(\bvarepsilon({\bu}))\cdot\bvarepsilon(\bv-\bu)\,dx
+  \int_{\Gamma_3}F_b(u_\nu)\left(\|\bv_\tau\|-\|\bu_\tau\|\right) ds  \nonumber\\[1mm]
&&\qquad{}+\int_{\Gamma_3} j_\nu^0 (u_\nu; v_\nu-u_\nu)\, ds
\ge \langle\fb,\bv-\bu\rangle_{V^*\times V} \quad\forall\, \bv \in U. \label{MOS30h}
\end{eqnarray}

\smallskip
Let $X=V$, $K=U$, $X_\varphi=L^2(\Gamma_3)^d$ with $\gamma_\varphi$ the trace operator from $V$ to
$X_\varphi$, $X_j=L^2(\Gamma_3)$ with $\gamma_j \bv=v_\nu$ for $\bv\in V$.  Then, 
$\alpha_\varphi=L_{F_b}$ and $\alpha_j=\alpha_{j_\nu}$. Similar to the analysis of Problem (P${}_1$),
we can apply Theorem \ref{t1} and know that Problem (P${}_2$) has a unique solution $\bu\in U$ 
under the stated assumptions, and \eqref{small} takes the form
\begin{equation}
L_{F_b}\lambda_{1,V}^{-1}+\alpha_{j_\nu}\lambda_{1\nu,V}^{-1} < m_{\cal F},
\label{MOS30i}
\end{equation}
where $\lambda_{1,V}>0$ is the smallest eigenvalue of the eigenvalue problem
\[ \bu\in V,\quad \int_\Omega \bvarepsilon(\bu){\cdot}\bvarepsilon(\bv)\,dx=\lambda\int_{\Gamma_3}
 \bu{\cdot}\bv\,ds\quad\forall\,\bv\in V, \]
and $\lambda_{1\nu,V}>0$ is the smallest eigenvalue of the eigenvalue problem
\[ \bu\in V,\quad \int_\Omega \bvarepsilon(\bu){\cdot}\bvarepsilon(\bv)\,dx=\lambda\int_{\Gamma_3}
 u_\nu v_\nu ds\quad\forall\,\bv\in V.\]
 
Introduce an operator $P$ by
\begin{equation}
\langle P(\bu),\bv\rangle=\int_{\Gamma_3}(u_\nu-g)_{+}v_\nu ds, \quad \bu,\bv\in V.
\label{ex:P}
\end{equation}
It is easy to verify that $P\colon V\to V^*$ is a penalty operator for the set $U$.  Therefore, 
the penalized formulation of Problem (P${}_2$) consists to find $\bu_\epsilon\in V$ such that
\begin{eqnarray}
&& \int_\Omega {\cal F}(\bvarepsilon({\bu_\epsilon}))\cdot\bvarepsilon(\bv-\bu_\epsilon)\,dx
+\frac{1}{\epsilon}\int_{\Gamma_3}(u_{\epsilon,\nu}-g)_{+}(v_\nu-u_{\epsilon,\nu})\, ds
+  \int_{\Gamma_3}F_b(u_{\epsilon,\nu})\left(\|\bv_\tau\|-\|\bu_{\epsilon,\tau}\|\right)ds\nonumber\\[1mm]
&&\qquad{}+\int_{\Gamma_3} j_\nu^0 (u_{\epsilon,\nu}; v_\nu-u_{\epsilon,\nu})\, ds
\ge \langle\fb,\bv-\bu_\epsilon\rangle_{V^*\times V} \quad\forall\, \bv \in V. \label{ex1c}
\end{eqnarray}

We use the finite element setting already used in Problem (P${}_{1,\epsilon}^h$).
Then, the penalty based numerical method for Problem (P${}_2$) is as follows.

\medskip
\noindent {\sc Problem (P${}_{2,\epsilon}^h$)}.  
\emph{Find a displacement field $\bu^h_\epsilon\in V^h$ such that}
\begin{eqnarray}
&& \int_\Omega {\cal F}(\bvarepsilon({\bu^h_\epsilon}))\cdot\bvarepsilon(\bv^h-\bu^h_\epsilon)\,dx
+\frac{1}{\epsilon}\int_{\Gamma_3}(u^h_{\epsilon,\nu}-g)_{+}(v^h_\nu-u^h_{\epsilon,\nu})\, ds
+  \int_{\Gamma_3}F_b(u_\nu^h)\left(\|\bv_\tau^h\|-\|\bu_{\epsilon,\tau}^h\|\right) ds \nonumber\\[1mm]
&&\qquad{}+\int_{\Gamma_3} j_\nu^0 (u_{\epsilon,\nu}^h; v_\nu^h-u_{\epsilon,\nu}^h)\, ds
\ge \langle\fb,\bv^h-\bu^h_\epsilon\rangle_{V^*\times V} \quad\forall\, \bv ^h\in V^h. \label{ex1a}
\end{eqnarray}

Similar to the convergence discussion of the numerical method Problem (P${}_{1,\epsilon}^h$),  
again we need to examine the condition $(A_9)$:
\begin{equation}
\forall\,\bv\in U,\ \exists\,\bv^h\in V^h\cap U\ \ {\rm s.t.}\ \ \lim_{h\to 0}\|\bv^h-\bv\|_V=0. 
\label{property2}
\end{equation}
As is noted in \cite{HMS18}, if $C^\infty(\overline{\Omega})^d\cap U$ is dense in $U$ and 
the function $g$ is concave (in many applications, $g$ is constant), then \eqref{property2} is valid.
We assume this is the case.  Then we have the convergence of the numerical method defined by 
Problem (P${}_{2,\epsilon}^h$):
\begin{equation}
 \bu^h_\epsilon\to \bu\quad{\rm in\ }V,\ {\rm as\ }h,\epsilon\to 0. \label{ex1b}
\end{equation}

In the special case where $j_\nu$ is monotone, we have the convergence result for 
the penalty based numerical method of a constrained variational inequality.

\end{document}